\title{On VC-minimal theories and variants}
\author{Vincent Guingona}
\address[Guingona]{University of Notre Dame \\ Department of Mathematics \\ 255 Hurley, Notre Dame, IN 46556}
\email{guingona.1@nd.edu}
\urladdr{http://www.nd.edu/~vguingon/}
\author{Michael C. Laskowski}
\address[Laskowski]{University of Maryland \\ Department of Mathematics \\ College Park, MD 20742}
\email{mcl@math.umd.edu}
\urladdr{http://www.math.umd.edu/~mcl/}
\thanks{2010 \textit{Mathematics Subject Classification.} Primary: 03C45. \\ \indent \textit{Key words and phrases.} VC-minimal, UDTFS, convexly orderable, NIP, dependent, Kueker Conjecture \\ \indent Both authors are partially supported by Laskowski's NSF grants DMS-0600217 and 0901336}
\date{\today}
\newtheorem{thm}{Theorem}[section]
\newtheorem{cor}[thm]{Corollary}
\newtheorem{lem}[thm]{Lemma}
\newtheorem{prop}[thm]{Proposition}
\theoremstyle{remark}
\newtheorem{expl}[thm]{Example}
\theoremstyle{definition}
\newtheorem{defn}[thm]{Definition}
\newcommand{\dom}{\mathrm{dom} }
\newcommand{\tp}{\mathrm{tp} }
\newcommand{\ID}{\mathrm{ID} }
\newcommand{\leng}{\mathrm{lg} }
\newcommand{\Th}{\mathrm{Th} }
\begin{document}

\begin{abstract}
 In this paper, we study VC-minimal theories and explore related concepts.  We first define the notion of convex orderablility and show that this lies strictly between VC-minimality and dp-minimality.  Next, we define the notion of weak VC-minimality, show it lies strictly between VC-minimality and dependence, and show that all unstable weakly VC-minimal theories interpret an infinite linear order.  Finally, we define the notion full VC-minimality, show that this lies strictly between weak o-minimality and VC-minimality, and show that theories that are fully VC-minimal have low VC-density.
\end{abstract}

\maketitle

\section{Introduction}\label{Section_intro}


Hans Adler introduced VC-minimal theories in \cite{Adler2008}.  This notion generalizes both the notion of weak o-minimality and the notion of strong minimality and encompasses algebraically closed valued fields.  In fact, like algebraically closed valued fields, definable subsets of the universe are disjoint unions of swiss chesses (see \cite{Adler2008} for a formal definition).  Unfortunately, VC-minimality can be a difficult notion to work with.  In this paper, we propose several variants of VC-minimality each aimed at patching a specific problem with the class of theories.

In Section \ref{Section_COS}, we study the notion of convex orderability.  This property fits properly between VC-minimality and dp-minimality and is, in some ways, more robust than VC-minimality.  For example, convex orderability is closed under reducts and amounts to a simple constraint on finite type spaces.  Additionally, convexly orderable theories ``look like'' weakly o-minimal theories in some sense.  In Section \ref{Section_VCVar}, we introduce two variants on VC-minimality.  First, we define a generalization of VC-minimality called weak VC-minimality and we show that all weakly VC-minimal theories satisfy the Kueker Conjecture.  Next, we introduce the stronger notion of full VC-minimality and show that a theory that is fully VC-minimal has low VC-density (i.e., all formulas $\varphi(\overline{x}; \overline{y})$ have VC-density $\le \leng(\overline{x})$).

For this paper, a ``formula'' will mean a $\emptyset$-definable formula in a fixed language $L$ unless otherwise specified.  If $\theta(\overline{x})$ is a formula, then let me denote $\theta(\overline{x})^0 = \neg \theta(\overline{x})$ and $\theta(\overline{x})^1 = \theta(\overline{x})$.  We will be working in a complete, first-order theory $T$ in a fixed language $L$ with monster model $\mathfrak{C}$.  Fix $M \models T$ (so $M \preceq \mathfrak{C}$) and a partitioned $L$-formula $\varphi(\overline{x}; \overline{y})$.  By $\varphi(M; \overline{b})$ for some $\overline{b} \in \mathfrak{C}^{\leng(\overline{y})}$, we mean the following subset of $M^{\leng(\overline{x})}$:

\[
 \varphi(M; \overline{b}) = \{ \overline{a} \in M^{\leng(\overline{x})} : \,\, \models \varphi(\overline{a}; \overline{b}) \}.
\]

First, consider any set $X$ and let $\mathcal{A} \subseteq \mathcal{P}(X)$.  We will say that \textit{$\mathcal{A}$ is independent with respect to $X$} if, for all functions $s : \mathcal{A} \rightarrow 2$, we have that $\bigcap_{B \in \mathcal{A}} B^{s(B)} \neq \emptyset$, where we let $B^0 = (X - B)$ and $B^1 = B$.  For some $N < \omega$, we will say that $\mathcal{A}$ has \textit{independence dimension $N$ with respect to $X$}, denoted $\ID_X(\mathcal{A}) = N$, if $N$ is maximal such that there exists $\mathcal{B} \subseteq \mathcal{A}$ with $|\mathcal{B}| = N$ and $\mathcal{B}$ is independent with respect to $X$.  We apply these general notions to the specific case of the formula $\varphi$ and subsets of $\mathfrak{C}^{\leng(\overline{y})}$ as follows: We will say that a set $B \subseteq \mathfrak{C}^{\leng(\overline{y})}$ is $\varphi$-\textit{independent} if the set of realizations $\mathcal{B} = \{ \varphi(\mathfrak{C}; \overline{b}) : \overline{b} \in B \}$ is independent with respect to $\mathfrak{C}^{\leng(\overline{x})}$ (i.e. for any map $s : B \rightarrow 2$, the set of formulas $\{ \varphi(\overline{x}; \overline{b})^{s(\overline{b})} : \overline{b} \in B \}$ is consistent).  We will say that $\varphi$ has \textit{independence dimension} $N < \omega$, which we will denote by $\ID(\varphi) = N$, if $\mathcal{A} = \{ \varphi(\mathfrak{C}; \overline{b}) : \overline{b} \in \mathfrak{C}^{\leng(\overline{y})} \}$ has independence dimension $N$ with respect to $\mathfrak{C}^{\leng(\overline{x})}$.  We will say that $\varphi$ is dependent (some authors call this NIP for ``not the independence property'') if $\ID(\varphi) = N$ for some $N < \omega$.  Finally, we will say that a theory $T$ is dependent if all partitioned formulas are dependent.

By a ``$\varphi$-type over $B$'' for some set $B$ of $\leng(\overline{y})$-tuples we mean a consistent set of formulas of the form $\varphi(\overline{x}; \overline{b})^t$ for some $t < 2$ and ranging over all $\overline{b} \in B$.  If $p$ is a $\varphi$-type over $B$, then we will say that $p$ has domain $\dom(p) = B$.  For any $B$ a set of $\leng(\overline{y})$-tuples, the space of all $\varphi$-types with domain $B$ is denoted $S_\varphi(B)$.


We conclude this section with a discussion of VC-minimal theories.  The following definition was given by Adler in \cite{Adler2008}:

\begin{defn}\label{Defn_VCMin}
 A set of formulas $\Phi = \{ \varphi_i(x; \overline{y}_i) : i \in I \}$ is a \emph{VC-instantiable family} if the set $\mathcal{A}_\Phi = \{ \varphi_i(\mathfrak{C}; \overline{b}) : i \in I, \overline{b} \in \mathfrak{C}^{\leng(\overline{y}_i)} \}$ has independence dimension $\le 1$ with respect to $\mathfrak{C}$.  An \emph{instance} of $\Phi$ means $\varphi_i(x; \overline{b})$ for some $i \in I$ and $\overline{b} \in \mathfrak{C}^{\leng(\overline{y}_i)}$.  We say that a theory $T$ is \emph{VC-minimal} if there exists a VC-minimal instantiable family $\Phi$ such that all parameter-definable formulas $\psi(x)$ are $T$-equivalent to a boolean combination of instances of $\Phi$.
\end{defn}

VC-minimal theories have a ``skeleton'' of formulas that have independence dimension $\le 1$.  VC-minimality was developed primarily to generalize the appearance of swiss cheeses in the theory of algebraically closed valued fields (see Proposition 7 of \cite{Adler2008}).  We get the following theorem relating VC-minimality to other model-theoretic properties:

\begin{thm}[\cite{Adler2008}]\label{Thm_VCMinEasy}
 The following hold:
 \begin{itemize}
  \item [(i)] If $T$ is strongly minimal, o-minimal, or weakly o-minimal, then $T$ is VC-minimal.
  \item [(ii)] The theory $\mathrm{ACVF}$ is VC-minimal.
  \item [(iii)] If $T$ is VC-minimal, then $T$ is dp-minimal.
 \end{itemize}
\end{thm}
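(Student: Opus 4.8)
The plan is to treat the three parts separately, producing in each of (i) and (ii) an explicit skeleton $\Phi$ and verifying the two defining requirements of Definition~\ref{Defn_VCMin} (that $\mathcal{A}_\Phi$ has $\ID \le 1$, and that every parameter-definable $\psi(x)$ is a boolean combination of instances), while reserving the real work for (iii). For strong minimality I would take $\Phi = \{x = y\}$: the instances are singletons, any two distinct ones are disjoint, so no pair is independent and $\ID(\mathcal{A}_\Phi) \le 1$; since every definable subset of the line is finite or cofinite, it is a boolean combination of singletons. Strong minimality is thus immediate, and o-minimality is subsumed by weak o-minimality, which is the substantive case of (i). There I would let $\Phi$ consist of the (uniformly definable) formulas whose instances are the \emph{initial segments} of the order. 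Any two initial segments $I, J$ satisfy $I \subseteq J$ or $J \subseteq I$ (if $a \in I \setminus J$ and $b \in J \setminus I$ then comparing $a,b$ to the downward-closed sets forces a contradiction), so the instances form a chain and $\ID(\mathcal{A}_\Phi) \le 1$. Each definable set is a finite union of convex sets, and a convex set $C$ equals (downward closure of $C$) $\cap$ (complement of the initial segment strictly below $C$), hence is a boolean combination of instances. For (ii) I would run the same argument with $\Phi$ the family of open and closed balls of the valued field: the ultrametric inequality makes any two balls nested or disjoint, giving $\ID(\mathcal{A}_\Phi) \le 1$, and quantifier elimination in $\mathrm{ACVF}$ expresses every definable subset of the field as a swiss cheese, i.e.\ a boolean combination of balls.

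For (iii), suppose $T$ is VC-minimal with skeleton $\Phi$, and argue by contradiction through the ict-pattern characterization of dp-minimality: if $T$ is not dp-minimal there is an ict-pattern of depth $2$ in the home variable $x$, namely formulas $\varphi_1(x;\overline{y}_1), \varphi_2(x;\overline{y}_2)$ and arrays $(\overline{b}_{1,j})_j, (\overline{b}_{2,j})_j$ such that for every $(\eta_1,\eta_2)$ there is $x$ realizing $\varphi_l(x;\overline{b}_{l,\eta_l})^1$ and $\varphi_l(x;\overline{b}_{l,j})^0$ for all $j \ne \eta_l$, $l=1,2$. The goal of the argument is to \emph{reduce each row to a single instance of $\Phi$}: after passing (by Ramsey) to mutually indiscernible arrays, each $\varphi_l(x;\overline{b}_{l,j})$ is a boolean combination of boundedly many instances $\psi_{l,m}(x;\overline{c}_{l,j})$, and for a fixed $x$ the trace $\{j : \models \psi_{l,m}(x;\overline{c}_{l,j})\}$ is, by the $\ID \le 1$ directedness of $\mathcal{A}_\Phi$ together with indiscernibility, either an interval or the complement of a single point in $j$. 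Tracking how these boundedly many tame traces must combine to isolate the single position $\eta_l$ lets me extract instances $\psi_1(x;\overline{c}_1), \psi_2(x;\overline{c}_2) \in \Phi$ that, on suitable subarrays, still form a depth-$2$ pattern.

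Once the pattern uses only single instances, the contradiction is short. Writing $A_0 = \psi_1(\mathfrak{C};\overline{b}_{1,0})$ and $B_0 = \psi_2(\mathfrak{C};\overline{b}_{2,0})$ (both members of $\mathcal{A}_\Phi$), the four corners $(\eta_1,\eta_2) \in \{0,1\}^2$ of the pattern produce, respectively, witnesses in $A_0 \cap B_0$, in $A_0 \cap (\mathfrak{C} - B_0)$, in $(\mathfrak{C} - A_0) \cap B_0$, and in $(\mathfrak{C} - A_0) \cap (\mathfrak{C} - B_0)$, so that $\{A_0, B_0\}$ is independent with respect to $\mathfrak{C}$ and hence $\ID(\mathcal{A}_\Phi) \ge 2$, contradicting that $\Phi$ is VC-instantiable. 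I expect the \textbf{main obstacle} to be precisely the reduction step of the previous paragraph: a boolean combination can isolate a single position in the array through the intersection of an initial-segment trace and a final-segment trace coming from \emph{two} different constituent instances, rather than from one. The delicate point is to show that even in this case the directedness $\ID(\mathcal{A}_\Phi) \le 1$ constrains the traces enough to salvage a genuine single-instance depth-$2$ pattern (or to run the four-corner argument directly on the constituent instances), and making this book-keeping precise while preserving joint consistency across the two mutually indiscernible arrays is where the care is required.
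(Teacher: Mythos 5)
Parts (i) and (ii) of your proposal are correct, and they follow the standard constructions: singletons for strong minimality; downward-closed sets (with a convex set written as its downward closure minus the initial segment strictly below it) for weak o-minimality; and balls, with points as degenerate closed balls, for $\mathrm{ACVF}$, where the swiss-cheese decomposition coming from quantifier elimination gives the boolean-combination clause of Definition \ref{Defn_VCMin}. Note that the paper itself cites this theorem to \cite{Adler2008} and gives no proof, so your argument has to stand on its own; for (i) and (ii) it does.

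The genuine gap is in (iii), and it sits exactly where you flag it: the reduction of each row of the depth-$2$ ict-pattern to a \emph{single} instance of $\Phi$ is the entire mathematical content of the proof, and you do not carry it out. Your tame-trace observation is correct (pairwise directedness plus indiscernibility forces each trace to be an initial segment, a final segment, a set of size $\le 1$, or the complement of such a set), and your four-corner endgame is correct once single-instance rows are in hand. But the passage between them is not book-keeping: isolation of the position $\eta_l$ is typically achieved by intersecting a final-segment trace with an initial-segment trace coming from two \emph{different} constituent families, so no single instance detects $\eta_l$; worse, the thresholds of those traces depend on the non-canonical choice of witness for each of the $N^2$ corners, so before any pigeonhole or Ramsey argument applies one must choose the entire witness array coherently while preserving the pattern. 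Since this is precisely the step at which the hypothesis $\ID(\mathcal{A}_\Phi) \le 1$ gets used against the pattern, deferring it leaves part (iii) unproven.

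It is worth noting that the paper's own Section \ref{Section_COS} provides a way to finish that avoids the two-array extraction entirely, and you could graft it onto your argument. By Proposition \ref{Prop_IDoneOrder} there is a single linear order $<$ on $\mathfrak{C}$ making every set in $\mathcal{A}_\Phi$ convex or co-convex; then each $\varphi_l(\mathfrak{C}; \overline{b}_{l,j})$, being a boolean combination of at most $K$ skeleton instances, is a union of at most $K+1$ $<$-convex sets (Theorem \ref{Thm_VCMinCO}). An $N$-column depth-$2$ pattern demands $N^2$ distinct $\{\varphi_1,\varphi_2\}$-types over the $2N$ parameters, while the $2N$ sets involved have $O(N)$ endpoints and hence cut $\mathfrak{C}$ into only $O(N)$ pieces; this is a contradiction for large $N$. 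This is exactly the chain VC-minimal $\Rightarrow$ convexly orderable $\Rightarrow$ VC-density $\le 1$ for formulas $\varphi(x;\overline{y})$ $\Rightarrow$ dp-minimal (Theorem \ref{Thm_VCMinCO}, Proposition \ref{Prop_ConvexlyOrderableimpliesVCDensityOne}, Corollary \ref{Cor_ConvexlyOrderableimpliesDpMin}): ordering the universe once, globally, replaces the delicate per-array trace analysis with a one-line counting argument.
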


A simple example shows that VC-minimal theories are not closed under reduct.  This example is given in \cite{Adler2008}.

\begin{expl}\label{Expl_VCReduct}
 One can check that the theory of a dense cyclic order is not VC-minimal.  However, this theory is the reduct of dense linear order via $R(x,y,z) = x < y < z \vee y < z < x \vee z < x < y$.  Therefore, since dense linear order is o-minimal, hence VC-minimal, this shows that VC-minimality is not closed under reducts.
\end{expl}


\section{Convexly Orderable Structures}\label{Section_COS}


The main objective of this section is to propose a robust surrogate for VC-minimality.  In a weakly o-minimal theory $T$, for any formula $\varphi(x; \overline{y})$, there exists $K < \omega$ such that, for all $\overline{b} \in \mathfrak{C}^{\leng(\overline{b})}$, $\varphi(\mathfrak{C}; \overline{b})$ is a union of at most $K$ convex subsets of $\mathfrak{C}$.  Thus, models of a weakly o-minimal theory are convexly orderable, defined as follows:

\begin{defn}\label{Defn_ConvexlyOrderable}
 Fix an $L$-structure $M$.  We say that $M$ is \emph{convexly orderable} if there exists a linear order $<$ on $M$ (not necessarily definable) such that, for all formulas $\varphi(x; \overline{y})$, there exists $K < \omega$ such that, for all $\overline{b} \in M^{\leng(\overline{b})}$, $\varphi(M; \overline{b})$ is a union of at most $K$ $<$-convex subsets of $M$.  We say a theory $T$ is \emph{convexly orderable} if all of its models are (equivalently, by Proposition \ref{Prop_ConvexOrderability}, if there exists a convexly orderable model of $T$).
\end{defn}

Convex orderability is a finitary property.

\begin{lem}\label{Lem_COFinitary}
 Suppose $M$ is an $L$-structure.  Suppose that, for each $L$-formula $\varphi(x; \overline{y})$, there exists $K_\varphi < \omega$ such that, for all finite collections of $M$-definable formulas $\{ \varphi_i(x; \overline{b}_i) : i < L \}$, there exists $<$ an ordering on $M$ such that, for all $i < L$, $\varphi_i(M; \overline{b}_i)$ is a union of at most $K_{\varphi_i}$ $<$-convex subsets of $M$.  Then, $M$ is convexly orderable. 
\end{lem}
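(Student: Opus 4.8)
The plan is to run a compactness argument in the space of linear orders on $M$. Identify a binary relation on $M$ with its graph, an element of the product space $2^{M \times M}$ equipped with the product topology; by Tychonoff this space is compact. The collection of strict linear orders on $M$ is cut out by the conditions of irreflexivity, totality/antisymmetry, and transitivity, each of which constrains only finitely many coordinates at a time, so the set $X$ of linear orders on $M$ is a closed, hence compact, subspace.

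For each formula $\varphi(x; \overline{y})$ and each $\overline{b} \in M^{\leng(\overline{y})}$, let $C_{\varphi, \overline{b}} \subseteq X$ be the set of linear orders $<$ for which $\varphi(M; \overline{b})$ is a union of at most $K_\varphi$ $<$-convex subsets of $M$. The first key step is to show that each $C_{\varphi,\overline{b}}$ is closed in $X$. For this I would record the elementary fact that, for a fixed subset $S \subseteq M$ and a fixed linear order $<$, the set $S$ is a union of at most $K$ $<$-convex sets if and only if there is no sequence $a_0 < a_1 < \dots < a_{2K}$ in $M$ with $a_j \in S$ exactly when $j$ is even. Indeed, such an alternating sequence forces $K+1$ distinct maximal $<$-convex components of $S$, and conversely $K+1$ such components produce one. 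Since $S = \varphi(M; \overline{b})$ does not depend on $<$, the existence of an alternating sequence is a union of basic open conditions (each constraining only the finitely many order-coordinates among $a_0, \dots, a_{2K}$), hence open; its negation, which defines $C_{\varphi,\overline{b}}$, is therefore closed.

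A single linear order witnessing convex orderability in the sense of Definition \ref{Defn_ConvexlyOrderable} is exactly a point of $\bigcap_{\varphi, \overline{b}} C_{\varphi, \overline{b}}$, so it suffices to prove this intersection is nonempty. By compactness of $X$ it is enough to verify the finite intersection property: for any finite list of pairs $(\varphi_i, \overline{b}_i)$ with $i < L$, the set $\bigcap_{i<L} C_{\varphi_i, \overline{b}_i}$ is nonempty. But this is precisely the content of the hypothesis, which supplies a single ordering $<$ on $M$ making each $\varphi_i(M; \overline{b}_i)$ a union of at most $K_{\varphi_i}$ $<$-convex sets. Thus every finite subfamily has a common point, and compactness yields a global order that works simultaneously for all formulas and all instances, as desired.

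I expect the main obstacle to be the bookkeeping in the closedness step, specifically pinning down the alternating-sequence characterization of ``union of at most $K$ convex sets'' and checking that it transfers cleanly to a statement depending on only finitely many coordinates of the order. Everything else is a routine application of Tychonoff's theorem together with the finite intersection property, with the hypothesis doing exactly the work needed to verify finite consistency.
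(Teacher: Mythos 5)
Your proof is correct, but it takes a genuinely different route from the paper's. The paper also argues by compactness, but model-theoretically: it introduces a unary predicate $P_\theta$ for every $M$-definable set $\theta(M)$, applies first-order compactness to obtain an ordered structure $M^{**}$ extending $M^*$ in which every $P_\theta(M^{**})$ is a union of at most $K_\varphi$ $<$-convex sets, and then restricts $<$ back to $M$, using that ``union of at most $K$ convex sets'' is preserved on passing to the induced suborder of a subset. Your argument never leaves $M$: you compactify the space of linear orders on $M$ itself via Tychonoff, and the technical work shifts to proving that each constraint set $C_{\varphi,\overline{b}}$ is closed, which you settle with the alternating-sequence characterization of ``union of at most $K$ convex sets.'' It is worth noting that the paper's proof implicitly needs essentially the same characterization, since that is how one writes ``$P_\theta$ is a union of at most $K_\varphi$ $<$-convex sets'' as a first-order sentence before invoking compactness; your proof just makes this finitary combinatorial core explicit. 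What your approach buys is concreteness --- no auxiliary language, no larger universe, no downward-restriction step; what the paper's buys is brevity, since first-order compactness can be quoted off the shelf instead of verifying topological closedness by hand. Both proofs consume the hypothesis in exactly the same way, as the finite-consistency (finite intersection property) step.
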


\begin{proof}
 Let $\Theta(x)$ be the collection of all $M$-definable formulas $\theta(x)$, let $L^* = \{ P_\theta : \theta \in \Theta \}$ where $P_\theta$ is a unary relation symbol, and let $L^{**} = L^* \cup \{ < \}$.  Let $M^*$ be the obvious $L^*$ structure with universe $M$ (where $P_\theta(M^*) = \theta(M)$).  Consider a finite $\Theta_0 \subseteq \Theta$.  By hypothesis, there exists $<$ an ordering on $M^*$ so that, for all $\varphi(x; \overline{b}) \in \Theta_0$, $P_{\varphi(x; \overline{b})}(M^*)$ is a union of at most $K_\varphi$ $<$-convex subsets of $M^*$.  By compactness, there exists an $L^{**}$-structure $M^{**} \supseteq M^*$ so that, for all $\varphi(x; \overline{b}) \in \Theta$, $P_{\varphi(x; \overline{b})}(M^{**})$ is a union of at most $K_\varphi$ $<$-convex subsets of $M^{**}$.  However, since convexity is closed under linear subspace, if we take the suborder $< |_{M \times M}$ on $M$, we get the desired result.
\end{proof}

\begin{prop}\label{Prop_ConvexOrderability}
 If $M$ is a convexly orderable $L$-structure and $M \equiv N$, then $N$ is convexly orderable.
\end{prop}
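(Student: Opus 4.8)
The plan is to reduce to the finitary criterion of Lemma~\ref{Lem_COFinitary} and then transfer the relevant combinatorial data from $M$ to $N$ using only $M \equiv N$. Fix a linear order $<_M$ on $M$ and bounds $(K_\varphi)_\varphi$ witnessing that $M$ is convexly orderable. By Lemma~\ref{Lem_COFinitary}, to prove that $N$ is convexly orderable it suffices, for these same bounds $K_\varphi$, to take an arbitrary finite collection of $N$-instances $\varphi_0(x;\overline{b}_0),\dots,\varphi_{n-1}(x;\overline{b}_{n-1})$ and to produce a linear order on $N$ under which each $\varphi_i(N;\overline{b}_i)$ is a union of at most $K_{\varphi_i}$ convex subsets.

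The key observation is that whether such an order exists depends only on which Boolean cells cut out by the instances are nonempty, and this pattern is an elementary invariant. For $s \in {}^{n}2$, call $s$ \emph{realized} (over $N$, by the $\overline{b}_i$) if $\models \exists x\,\bigwedge_{i<n}\varphi_i(x;\overline{b}_i)^{s(i)}$, and let $P\subseteq {}^{n}2$ be the set of realized patterns. The $L$-sentence $\exists \overline{y}_0\cdots\overline{y}_{n-1}\big(\bigwedge_{s\in P}\exists x\,\bigwedge_i\varphi_i(x;\overline{y}_i)^{s(i)}\wedge\bigwedge_{s\notin P}\neg\exists x\,\bigwedge_i\varphi_i(x;\overline{y}_i)^{s(i)}\big)$ asserts that some choice of parameters realizes \emph{exactly} the pattern $P$; it holds in $N$, so by $M\equiv N$ it holds in $M$, yielding tuples $\overline{c}_0,\dots,\overline{c}_{n-1}$ in $M$ whose instances realize exactly the same pattern $P$.

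Now I would extract a block order. For each realized $s\in P$ pick a representative $d_s$ of the $M$-cell $\{a\in M:\models\bigwedge_i\varphi_i(a;\overline{c}_i)^{s(i)}\}$, and let $\prec$ be the linear order on the finite set $P$ induced by $<_M$ on these representatives. Writing $J_i=\{s\in P:s(i)=1\}$, a short run-counting argument shows that the number of maximal $\prec$-blocks of $J_i$ is at most the number of $<_M$-convex components of $\varphi_i(M;\overline{c}_i)$, hence at most $K_{\varphi_i}$: consecutive blocks are separated by some $d_s$ with $s\notin J_i$, i.e.\ $d_s\notin\varphi_i(M;\overline{c}_i)$, so representatives from different blocks lie in different convex components. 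Finally, I would order $N$ by placing the cells $\{a\in N:\models\bigwedge_i\varphi_i(a;\overline{b}_i)^{s(i)}\}$ (nonempty exactly for $s\in P$) in the order $\prec$, ordering within each cell arbitrarily. Since each instance is a union of whole cells, the number of convex components of $\varphi_i(N;\overline{b}_i)$ equals the number of $\prec$-blocks of $J_i$, which is at most $K_{\varphi_i}$. Applying Lemma~\ref{Lem_COFinitary} to $N$ then finishes the proof.

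The main obstacle is that one cannot simply copy $<_M$ onto $N$: corresponding cells may have incomparable cardinalities in the two models, so there is no natural order-isomorphism to exploit. The device resolving this is the passage to a block order together with the realization that the only data controlling the convex-component count is the finite pattern $P$ of nonempty cells, which is preserved under elementary equivalence. I would also take care over the harmless but necessary point that one may order within each cell arbitrarily, which is legitimate precisely because every cell lies entirely inside or entirely outside each $\varphi_i(N;\overline{b}_i)$.
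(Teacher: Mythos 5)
Your proof is correct and follows the same route as the paper's: reduce to the finitary criterion of Lemma~\ref{Lem_COFinitary}, transfer the pattern $P$ of nonempty Boolean cells from $N$ to $M$ via a single sentence (the paper's $\sigma_{\Theta_0}$), and then use the order on $M$ to produce one on $N$. Your explicit block-order construction on the cells simply fills in the final step that the paper compresses into the phrase ``since the type spaces match, we can use this to put an ordering on $N$,'' so the two arguments are essentially identical.
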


\begin{proof}
 Let $<$ be a fixed ordering on $M$ given by convex orderability.  For each $L$-formula $\varphi(x; \overline{y})$, let $K_\varphi$ be such that, for all $\overline{b} \in M^{\leng(\overline{y})}$, $\varphi(M; \overline{b})$ is a union of at most $K_\varphi$ $<$-convex subsets of $M$.  We now use Lemma \ref{Lem_COFinitary} to conclude that $N$ is convexly orderable.

 Suppose $\Theta_0 = \{ \varphi_i(x; \overline{b}_i) : i < L \}$ and let $\eta_{\Theta_0} : {}^L 2 \rightarrow 2$ be such that, for all $s \in {}^L 2$:
 \[
  \eta_{\Theta_0}(s) = 1 \text{ if and only if } \models \exists x \bigwedge_{i < L} \varphi_i(x; \overline{b}_i)^{s(i)}.
 \]
 Then, since the following sentence holds in $N$, it also holds in $M$:
 \[
  \sigma_{\Theta_0} = \exists \overline{z}_0 ... \overline{z}_{L-1} \bigwedge_{s \in {}^L 2} \left( \exists x \bigwedge_{i < L} \varphi_i(x; \overline{z}_i)^{s(i)} \right)^{\eta_{\Theta_0}(s)}.
 \]
 Let $\overline{c}_0, ..., \overline{c}_{L_1}$ from $M$ be a witness to this.  Then by definition, $\varphi_i(M; \overline{c}_i)$ is the union of at most $K_{\varphi_i}$ $<$-convex subsets of $M$.  Finally, since the type spaces match, we can use this to put an ordering $<$ on $N$ so that $\varphi_i(N; \overline{b}_i)$ is the union of at most $K_{\varphi_i}$ $<$-convex subsets of $N$.  By Lemma \ref{Lem_COFinitary}, this gives us the desired conclusion.
\end{proof}

So convex orderability is really a constraint on the finite type spaces of a structure.  It is clear that if $T$ is a reduct of a convexly orderable theory, then $T$ is convexly orderable.  Therefore, Example \ref{Expl_VCReduct} shows there are theories that are convexly orderable but not VC-minimal.  The converse, however, is true:

\begin{thm}\label{Thm_VCMinCO}
 If $T$ is a VC-minimal theory, then $T$ is convexly orderable.
\end{thm}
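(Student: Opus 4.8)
The plan is to reduce, via the two preparatory results already established, to a purely combinatorial statement about a finite family of sets of independence dimension $\le 1$, and then to build a suitable linear order by hand from the nested/disjoint/cover trichotomy that $\mathrm{ID}\le 1$ forces. By Proposition \ref{Prop_ConvexOrderability} it suffices to exhibit a single convexly orderable model, and by Lemma \ref{Lem_COFinitary} it is enough, for each formula $\varphi(x;\overline{y})$, to produce a bound $K_\varphi$ and, for every finite collection $\{\varphi_i(x;\overline{b}_i):i<L\}$ of definable formulas, a single order witnessing that each $\varphi_i(M;\overline{b}_i)$ is a union of at most $K_{\varphi_i}$ convex sets. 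Fix a model $M$. Each $\varphi_i(x;\overline{b}_i)$ is, by VC-minimality (Definition \ref{Defn_VCMin}), $T$-equivalent to a Boolean combination of finitely many instances of the fixed family $\Phi$; restricting all these instances to $M$ gives finitely many subsets of $M$, and since restriction to a subset never increases independence dimension, this finite family $\mathcal{F}$ still satisfies $\mathrm{ID}_M(\mathcal{F})\le 1$.

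Second, I would extract a uniform bound on the number of instances needed, since the $K_{\varphi_i}$ demanded by Lemma \ref{Lem_COFinitary} must depend only on the formula $\varphi_i(x;\overline{y})$, not on the parameters $\overline{b}_i$. For a fixed $\varphi(x;\overline{y})$, a fixed finite $\Phi_0\subseteq\Phi$, and a fixed $N<\omega$, the assertion ``$\varphi(x;\overline{b})$ is equivalent to some Boolean combination of at most $N$ instances of formulas in $\Phi_0$'' is first-order in $\overline{b}$ (it is an existential quantification over the instance parameters followed by a disjunction over the finitely many Boolean templates). VC-minimality says every $\overline{b}$ satisfies one such assertion for some $(\Phi_0,N)$, so the partial type asserting the negation of all of them is inconsistent; compactness then yields a single pair $(\Phi_0,N_\varphi)$ valid uniformly in $\overline{b}$. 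In particular the family $\mathcal{F}$ above has size bounded in terms of the formulas $\varphi_i$ alone.

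The heart of the argument, and the step I expect to be hardest, is the combinatorial lemma: any finite family $\mathcal{F}$ of subsets of a set $X$ with $\mathrm{ID}_X(\mathcal{F})\le 1$ admits a linear order under which every member of $\mathcal{F}$ is a union of at most two convex sets. Here $\mathrm{ID}\le 1$ says exactly that any two members $A,B$ satisfy one of $A\subseteq B$, $B\subseteq A$, $A\cap B=\emptyset$, or $A\cup B=X$. One cannot hope to make every member genuinely convex: the three sets $\{1,2\},\{2,3\},\{1,3\}$ on $\{1,2,3\}$ have $\mathrm{ID}=1$ yet no single order makes all three convex, so the two-convex-piece slack is essential. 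My plan is to pass to the atoms of the finite Boolean algebra generated by $\mathcal{F}$ and arrange these atoms in a cyclic order so that each member of $\mathcal{F}$ is a contiguous arc; cutting the cycle at any point then yields a linear order in which each member is a union of at most two convex sets. Existence of such a cyclic arrangement is where the trichotomy must be used: a ``cover'' pair is precisely what obstructs a laminar (nested-or-disjoint) arrangement on a line but is harmless on a cycle. I would build the arrangement by orienting each set, replacing some members of $\mathcal{F}$ by their complements, so as to reduce to a laminar family drawn as a nested system of intervals; verifying that a consistent global orientation always exists, equivalently that the pairwise constraints coming from the trichotomy are simultaneously satisfiable, is the main obstacle.

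Granting the combinatorial lemma, the proof finishes quickly. Applying it to the finite family $\mathcal{F}$ from the first step gives a single order on $M$ in which each instance appearing in any $\varphi_i(M;\overline{b}_i)$ is a union of at most two convex sets; hence the Boolean combination $\varphi_i(M;\overline{b}_i)$ of at most $N_{\varphi_i}$ such instances is a union of at most $K_{\varphi_i}$ convex sets, with $K_{\varphi_i}$ depending only on $N_{\varphi_i}$ and therefore only on $\varphi_i(x;\overline{y})$. Lemma \ref{Lem_COFinitary} then shows $M$ is convexly orderable, and Proposition \ref{Prop_ConvexOrderability} propagates this to every model of $T$.
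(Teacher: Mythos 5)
Your reduction is sound and is essentially the paper's own infrastructure: Lemma \ref{Lem_COFinitary} to work finite-collection-by-finite-collection, and a compactness argument making the number of instances of $\Phi$ needed to express $\varphi(x;\overline{b})$ uniform in $\overline{b}$ (the paper gets its bound $K$ the same way). The genuine gap is that the combinatorial lemma you then invoke --- every finite $\mathcal{F} \subseteq \mathcal{P}(X)$ with $\ID_X(\mathcal{F}) \le 1$ admits a linear order under which each member is a union of at most two convex sets --- is the entire mathematical content of the theorem, and you do not prove it. You sketch two plans (a cyclic arrangement of the atoms, or re-orienting members by complementation to reach a laminar family) and then explicitly defer the decisive step, the simultaneous satisfiability of the orientation constraints, as ``the main obstacle.'' Everything you do prove is the routine part; with the core lemma missing, the argument does not establish the theorem.

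The missing idea (this is Lemma \ref{Lem_IDoneLO} of the paper) is that no global constraint-satisfaction problem needs to be solved: the orientation can be anchored to a single set. Fix $A \in \mathcal{F}$ with $A \neq \emptyset, X$. For any other $B$, $\ID_X \le 1$ says one of $A \cap B$, $A - B$, $B - A$, $X - (A \cup B)$ is empty, i.e., $B^s \subseteq A^t$ for some $s,t < 2$; assign $B^s$ to side $t$. Consistency on each side is then automatic: if two assigned sets $B_0^{s_0}, B_1^{s_1} \subseteq A^t$ crossed (intersection and both differences nonempty), their union would still omit the nonempty set $X - A^t$, so the pair would be independent with respect to $X$, contradicting $\ID_X(\mathcal{F}) \le 1$ (independence is unaffected by replacing a member with its complement). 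Hence each side carries a laminar family, and laminar families are made convex by an easy induction (take a $\subseteq$-maximal member; every other member lies inside it or is disjoint from it; recurse and concatenate). Concatenating the two side orders with $A^0 < A^1$ makes every $B^s$ convex, i.e., every member of $\mathcal{F}$ convex or co-convex --- exactly your ``two arcs after cutting the circle'' conclusion. With this lemma supplied, your proof closes; the only remaining (harmless) difference from the paper is that the paper proves the convex-or-co-convex statement for the full, infinite family of instances at once (Proposition \ref{Prop_IDoneOrder}, via compactness) and orders one model globally, whereas you glue finite pieces through Lemma \ref{Lem_COFinitary}.
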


In order to prove this, we actually show something more general about sets with independence dimension $\le 1$.

\begin{prop}\label{Prop_IDoneOrder}
 If $X$ is any set and $\mathcal{A} \subseteq \mathcal{P}(X)$ has independence dimension $\le 1$, then there exists $<$ a linear order on $X$ such that, for all $A \in \mathcal{A}$, either $A^0$ is a $<$-convex subset of $X$ or $A^1$ is a $<$-convex subset of $X$.
\end{prop}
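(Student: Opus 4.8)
The plan is to translate the hypothesis into a structural trichotomy, normalize the family by a distinguished base point so that it becomes laminar, and then order a laminar family so that each of its members is an interval.

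First I would unwind the definition. Since $\mathcal{A}$ has independence dimension $\le 1$, no two-element subfamily $\{A,B\}$ is independent, so for every pair of distinct $A, B \in \mathcal{A}$ at least one of the four sets $A^i \cap B^j$ is empty. Spelling this out using $A^0 = X - A$ and $A^1 = A$, every such pair satisfies at least one of: $A \subseteq B$, $B \subseteq A$, $A \cap B = \emptyset$ (disjoint), or $A \cup B = X$ (cover), since $A^1 \cap B^0 = \emptyset \Leftrightarrow A \subseteq B$, $A^0 \cap B^0 = \emptyset \Leftrightarrow A \cup B = X$, and similarly for the other two. The goal is a linear order $<$ on $X$ making each $A$ either $<$-convex (so $A^1$ is convex) or co-convex (so $A^0$ is convex).

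The crux is a normalization that converts the ``cover'' relation into disjointness. Fix any point $* \in X$ and, for each $A \in \mathcal{A}$, set $C_A = A$ if $* \notin A$ and $C_A = X - A$ if $* \in A$; thus $C_A \in \{A^0, A^1\}$ and $* \notin C_A$ for every $A$. I claim $\mathcal{C} = \{C_A : A \in \mathcal{A}\}$ is laminar, i.e. any two members are nested or disjoint. This is a short case analysis on whether $*$ lies in $A$ and in $B$: if $* \notin A$ and $* \notin B$, the cover case is impossible (it would force $* \in A \cup B = X$), leaving nested or disjoint; if $* \in A$ and $* \in B$, the disjoint case is impossible (as $* \in A \cap B$) while each remaining relation makes $X - A, X - B$ nested or disjoint; and if $*$ lies in exactly one of them, the surviving relations again yield a nested or disjoint pair. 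In every case $C_A$ and $C_B$ are nested or disjoint. Since each $C_A$ is one of $A^0, A^1$, it now suffices to produce an order in which every member of the laminar family $\mathcal{C}$ is $<$-convex.

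It remains to convexly order a laminar family $\mathcal{C}$. I would set this up as a first-order theory with a constant $c_x$ for each $x \in X$, a linear-order symbol $<$, and a predicate $P_C$ pinned on the named points to each $C \in \mathcal{C}$, together with the axioms that $<$ is a linear order and that each $P_C$ is convex. Any finite fragment mentions only finitely many points $X_0 \subseteq X$, and on the finite set $X_0$ the restricted family $\{C \cap X_0 : C \in \mathcal{C}\}$ is again laminar; ordering it follows from the forest construction below, so every finite fragment is satisfiable, and compactness yields a single linear order on $X$ in which every $C \in \mathcal{C}$ is convex. For the finite step, $\mathcal{C}$ ordered by inclusion (with $X$ adjoined as top element and the points as atoms) forms a forest; choosing an arbitrary order on the children of each node and listing the atoms in the resulting depth-first order makes each $C$ an interval, hence convex. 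Transporting this order back through the normalization, each $A \in \mathcal{A}$ has $A^0$ or $A^1$ convex, as required. The main obstacle is precisely the normalization: the hypothesis only controls pairs up to the cover relation, which is not by itself compatible with convexity, and the key insight is that flipping exactly those sets containing a single fixed base point $*$ simultaneously discharges every cover into genuine nesting or disjointness while keeping the family laminar. Once laminarity is in hand, the compactness reduction and the forest ordering are routine.
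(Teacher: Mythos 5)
Your proof is correct, and it reaches the result by a genuinely different normalization than the paper's. The paper also reduces to the laminar (``nested or disjoint'') case and also finishes with compactness, but its reduction fixes a distinguished \emph{set} $A \in \mathcal{A}$: using independence dimension $\le 1$, every other member $B$ has some $B^s$ contained in $A^0$ or in $A^1$, so after flipping one obtains two laminar families living on the two halves $A^0$ and $A^1$; these are ordered separately (by induction on the size of the family --- pick a $\subseteq$-maximal member, split, recurse, which is the same recursion your forest/depth-first construction performs globally) and the two orders are concatenated with $A^0 < A^1$. Your reduction instead fixes a distinguished \emph{point} $* \in X$ and flips exactly those members containing $*$; your case analysis correctly shows this discharges the cover relation and yields a single laminar family on all of $X$. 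What your route buys: the normalization is done once, globally, and is valid for infinite $\mathcal{A}$ with no finiteness assumption, so compactness is needed only for the purely combinatorial statement that laminar families can be convexly ordered; the paper instead proves the full finite statement (normalization included) as a lemma and applies compactness to that. The compactness steps are essentially interchangeable (the paper names predicates and passes to an elementary extension, you use constants with pinned predicates); in your version you should also include the axioms $c_x \neq c_y$ for distinct $x, y \in X$ (or observe that points not separated by any member of $\mathcal{C}$ can be ordered arbitrarily among themselves), so that the order on the constants pulls back to a genuine linear order on $X$ --- a detail at the same level of informality as the paper's own ``restrict the order to $M$'' step.
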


First we show this for finite $\mathcal{A}$, then use compactness to yield the general result.

\begin{lem}\label{Lem_IDoneLO}
 Fix $X$ any set and let $\mathcal{A} \subseteq \mathcal{P}(X)$ be any finite set of subsets of $X$ that has independence dimension at most one.  Then, there exists a linear order $<$ on $X$ such that, for all $A \in \mathcal{A}$, either $A^0$ or $A^1$ is a $<$-convex subset of $X$.
\end{lem}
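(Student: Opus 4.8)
The plan is to recognize the hypothesis $\ID_X(\mathcal{A}) \le 1$ as the assertion that $\mathcal{A}$ is a \emph{cross-free} family. A two-element subfamily $\{A, B\}$ is independent precisely when all four Boolean atoms $A^i \cap B^j$ are nonempty, so having no independent pair means that for every $A, B \in \mathcal{A}$ at least one of $A \cap B$, $A \setminus B$, $B \setminus A$, or $X \setminus (A \cup B)$ is empty; equivalently, any two members are nested, disjoint, or cover $X$. This is the structural fact I would exploit.

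First I would reduce to the laminar case by an uncrossing trick relative to a basepoint. Fix any $r \in X$ (if $X = \emptyset$ the conclusion is trivial) and, for each $A \in \mathcal{A}$, let $\tilde{A}$ be whichever of $A^0, A^1$ does \emph{not} contain $r$. I would then show that $\widetilde{\mathcal{A}} = \{ \tilde{A} : A \in \mathcal{A} \}$ is laminar, i.e.\ any two members are nested or disjoint. The point is that ``crossing'' is invariant under replacing a set by its complement, since complementation merely permutes the four atoms; thus if $\tilde{A}$ and $\tilde{B}$ crossed, then $A$ and $B$ would cross, contradicting cross-freeness. But $\tilde{A}$ and $\tilde{B}$ cannot cross: both omit $r$, so $X \setminus (\tilde{A} \cup \tilde{B}) \neq \emptyset$, and cross-freeness then forces one of the three remaining atoms to be empty, giving $\tilde{A} \subseteq \tilde{B}$, $\tilde{B} \subseteq \tilde{A}$, or $\tilde{A} \cap \tilde{B} = \emptyset$.

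Next I would order a finite laminar family so that every member is convex, by induction on $|\widetilde{\mathcal{A}}|$. The $\subseteq$-maximal members are pairwise disjoint (distinct nested members would contradict maximality), so I would lay them out as consecutive blocks in any order, recursively order the points inside each maximal block using the laminar subfamily it contains, and place all points lying in no member at the very end. Each member of $\widetilde{\mathcal{A}}$ is then either one of the maximal blocks or is contained in a unique maximal block and convex within it, hence $<$-convex in $X$. Finally, since each $\tilde{A}$ is literally $A^0$ or $A^1$, the $<$-convexity of $\tilde{A}$ is exactly the desired conclusion that $A^0$ or $A^1$ is $<$-convex.

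The main obstacle, and the only nonroutine step, is the uncrossing: a cross-free family need not be laminar (the ``cover'' relation obstructs this directly), yet it becomes laminar as soon as one consistently passes to the side of each set avoiding a single fixed basepoint $r$. Once that is established, the inductive ordering of a laminar family is entirely standard, and the translation back to the original sets via complementation is immediate.
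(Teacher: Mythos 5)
Your proof is correct, and its key step differs genuinely from the paper's. Both arguments share the same skeleton: reduce to a laminar family (any two members nested or disjoint), then order a finite laminar family by induction so that every member is convex --- your block decomposition along the $\subseteq$-maximal members is essentially the paper's Claim 1, which splits on a single maximal member. The difference lies in how the reduction is performed. The paper uncrosses relative to a distinguished \emph{member} $A \in \mathcal{A}$: independence dimension $\le 1$ gives, for each other $B \in \mathcal{A}$, some side $B^s$ contained in $A^0$ or in $A^1$, producing \emph{two} laminar families, one living on $A^0$ and one on $A^1$; each is ordered by Claim 1 and the two orders are concatenated by declaring $A^0 < A^1$. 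You instead uncross relative to a distinguished \emph{point} $r \in X$: replacing every member by its side avoiding $r$ preserves cross-freeness (complementation merely permutes the four Boolean atoms) and forces the fourth atom to contain $r$, so the entire family becomes laminar in one stroke, and a single application of the laminar induction finishes. Your route buys uniformity --- one laminar family, one induction, no gluing --- and it isolates explicitly the complementation-invariance of independence that the paper uses only implicitly; it also smooths over the degenerate cases $\emptyset \in \mathcal{A}$ and $X \in \mathcal{A}$, which the paper sets aside separately, since every uncrossed set omits $r$ and hence cannot be all of $X$. The paper's route avoids choosing a basepoint and keeps the argument phrased intrinsically in terms of the pieces $A^0$ and $A^1$, at the cost of splitting into two families and gluing the resulting orders.
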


\begin{proof}[Proof of Lemma \ref{Lem_IDoneLO}]
 Notice that it suffices to show this for $\mathcal{A}$ such that $\emptyset \notin \mathcal{A}$ and $X \notin \mathcal{A}$ (regardless of the order we put on $X$, these two sets are convex).  We first prove the following claim via induction on the size of $\mathcal{A}$:

 \noindent \textbf{Claim 1.} If $\mathcal{A}$ is such that there exists no $A, B \in \mathcal{A}$ where $A \cap B \neq \emptyset$, $(B - A) \neq \emptyset$, and $(A - B) \neq \emptyset$, then there exists a linear order $<$ on $X$ such that, for all $A \in \mathcal{A}$, $A$ is a $<$-convex subset of $X$.

 \noindent \textit{Proof of Claim 1.}
  This proceeds by induction on $|\mathcal{A}|$.  If $|\mathcal{A}| = 1$, this is trivial, so suppose not.  Fix $A \in \mathcal{A}$ that is $\subseteq$-maximal.  As stated above, we may assume $A \neq X$.  Then, for all $B \in \mathcal{A}$, notice that either $B \subseteq A^0$ or $B \subseteq A^1$, but not both.  To see this, suppose that $A^0 \cap B^1 \neq \emptyset$ and $A^1 \cap B^1 \neq \emptyset$.  Further, suppose that $A^1 \cap B^0 = \emptyset$ (i.e., $A \subset B$).  This contradicts the maximality of $A$, so $A^1 \cap B^0 \neq \emptyset$.  Hence $A \cap B \neq \emptyset$, $(B - A) \neq \emptyset$, and $(A - B) \neq \emptyset$, contrary to assumption.

  Therefore, we can break up $\mathcal{A} - \{ A \}$ into two disjoint subsets:
  \[
   \mathcal{A}_t = \{ B \in \mathcal{A} : B \neq A \wedge B \subseteq A^t \}
  \]
  for each $t < 2$.  Now, as $\mathcal{A}_t$ has independence dimension $\le 1$ with respect to $A^t$, by induction hypothesis there exists an ordering $<_t$ on $A^t$ such that, for all $B \in \mathcal{A}_t$, $B$ is a $<_t$-convex subset of $A^t$.  Define $<$ globally by letting $<$ extend $<_0$ and $<_1$ and setting $A^0 < A^1$ (i.e., for all $a_0 \in A^0$ and $a_1 \in A^1$, $a_0 < a_1$).  Then, for all $B \in \mathcal{A}$, either:
  \begin{itemize}
   \item [(i)] $B = A^t$ for some $t < 2$ and $B$ is clearly $<$-convex;
   \item [(ii)] $B \in \mathcal{A}_0$ and $B$ is $<_0$-convex, hence since $B \subseteq A^0$ and $<$ extends $<_0$, $B$ is $<$-convex; or
   \item [(iii)] $B \in \mathcal{A}_1$ and $B$ is $<_1$-convex, hence since $B \subseteq A^1$ and $<$ extends $<_1$, $B$ is $<$-convex.
  \end{itemize}
  \hfill \small Claim 1 \normalsize $\Box$

 Now, more generally, fix $\mathcal{A} \subseteq \mathcal{P}(X)$ finite with $\ID_X(\mathcal{A}) \le 1$.  Fix any $A \in \mathcal{A}$ (recall that we may assume $A \neq X$ and $A \neq \emptyset$).  Now, by independence dimension $\le 1$, for any $B \in \mathcal{A}$, one of the following four conditions hold:

 \begin{itemize}
  \item [(i)] $B^0 \subseteq A^0$,
  \item [(ii)] $B^0 \subseteq A^1$,
  \item [(iii)] $B^1 \subseteq A^0$, or
  \item [(iv)] $B^1 \subseteq A^1$,
 \end{itemize}

 Let $\mathcal{A}_t = \{ B^s : B \in \mathcal{A} - \{ A \}, s < 2, B^s \subseteq A^t \}$ for each $t < 2$.  By the statement above, for any $B \in \mathcal{A} - \{ A \}$, there exists $t, s < 2$ such that $B^s \in \mathcal{A}_t$.  Furthermore, $\mathcal{A}_t$ has independence dimension $\le 1$ with respect to $A^t$ for both $t < 2$ (replacing $B$ with $B^0$ does not change the independence dimension).  We claim that $\mathcal{A}_t$ satisfies the hypotheses of Claim 1, hence showing that there is a linear order $<_t$ on $A^t$ so that, for all $B \in \mathcal{A}_t$, $B$ is a $<_t$-convex subset of $A^t$.

 To see this, choose any $B_0, B_1 \in \mathcal{A}_t$ and suppose that $B_0 \cap B_1 \neq \emptyset$, $(B_0 - B_1) \neq \emptyset$, and $(B_1 - B_0) \neq \emptyset$.  Then, since $A^t \neq X$ and $B_0, B_1 \subseteq A^t$, we have that $B_0 \cup B_1 \neq X$.  These four conditions together contradict the fact that $\ID_X(\mathcal{A}) \le 1$.

 Now, as before, let $<$ be the global ordering on $X$ that extends $<_0$ and $<_1$ so that $A^0 < A^1$.  Then, we claim that this ordering on $X$ satisfies the desired condition.  To see this, fix any $B \in \mathcal{A}$.  Then, for some $t, s < 2$, $B^s \in \mathcal{A}_t$ (hence $B^s \subseteq A^t$).  Therefore, $B^s$ is a $<_t$-convex subset of $A^t$.  As $<$ extends $<_t$ and $B^s \subseteq A^t$, $B^s$ is a $<$-convex subset of $X$.  That is, we have shown that all elements of $\mathcal{A}$ are either $<$-convex subsets of $X$ or the compliment of $<$-convex subsets of $X$.  This concludes the proof of the lemma.
\end{proof}

\begin{proof}[Proof of Proposition \ref{Prop_IDoneOrder}]
 As in the proof of Lemma \ref{Lem_COFinitary}, we use compactness.  Fix any set $X$ and fix $\mathcal{A} \subseteq \mathcal{P}(X)$ with independence dimension $\le 1$.  Consider the language $L$ which consists of unary predicate symbols $P_B$ for each $B \in \mathcal{A}$ and let $M = (X; B)_{B \in \mathcal{A}}$ be the natural $L$-structure.  Let $L' = L \cup \{ < \}$, where $<$ is a binary relation symbol.  Lemma \ref{Lem_IDoneLO}, there exists $N \succeq M$ such that $N$ is naturally an $L'$-structure with each $P_B(N)$ a $<$-convex subset of $N$ or the compliment of a $<$-convex subset of $N$.  However, being a convex subset is closed under linear subspace.  Thus, if we consider $< |_{X \times X}$, we get a linear ordering on $X$ with the desired result.
\end{proof}

We are now ready to prove Theorem \ref{Thm_VCMinCO}, showing that a structure with a VC-minimal theory is convexly orderable.

\begin{proof}[Proof of Theorem \ref{Thm_VCMinCO}]
 Fix $T$ a complete theory, and $M \models T$.  Suppose $T$ is VC-minimal and let $\Psi$ be a VC-minimal instantiable family witnessing this fact.  Let $\mathcal{A} = \{ \psi(M; \overline{c}) : \psi(x; \overline{z}) \in \Psi, \overline{c} \in M^{\leng(\overline{z})} \}$.  By VC-minimality, the independence dimension of $\mathcal{A}$ is $\le 1$.  Thus, by Proposition \ref{Prop_IDoneOrder}, there exists a linear order $<$ on $M$ so that all elements of $\mathcal{A}$ are either $<$-convex or the compliment of a $<$-convex set.  Fix a formula $\varphi(x; \overline{y})$.  Then, by compactness, there exists a number $K < \omega$ such that, for all $\overline{b} \in M^{\leng(\overline{y})}$, $\varphi(x; \overline{b})$ is a boolean combination of at most $K$ instances of formulas from $\Psi$.  Therefore, for any $\overline{b} \in M^{\leng(\overline{y})}$, since $\varphi(M; \overline{b})$ is a boolean combination of at most $K$ elements from $\mathcal{A}$, it is a union of at most $K+1$ $<$-convex subsets of $M$.
\end{proof}

As shown above, convex orderability is not equivalent to VC-minimality.  One interesting open question we get from this is the following: Does being the reduct of a structure with a VC-minimal theory characterize convex orderability?  Convex orderablility and VC-minimality share a similar relationship with VC-density and dp-minimality.

\begin{defn}\label{Defn_VCDensity}
 A formula $\varphi(\overline{x}; \overline{y})$ has VC-density $\le \ell$ if there exists $K < \omega$ such that, for all finite $B \subseteq \mathfrak{C}^{\leng(\overline{y})}$, we have $| S_\varphi(B) | \le K \cdot |B|^\ell$.
\end{defn}

\begin{prop}\label{Prop_ConvexlyOrderableimpliesVCDensityOne}
 If $T$ is convexly orderable, then all formulas of the form $\varphi(x; \overline{y})$ (with $x$ a singleton) have VC-density $\le 1$.
\end{prop}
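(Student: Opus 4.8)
The plan is to work inside the monster model $\mathfrak{C}$, which is itself a model of $T$ and hence convexly orderable by hypothesis (using that convex orderability holds of all models). Fix a linear order $<$ on $\mathfrak{C}$ witnessing convex orderability, and for the given $\varphi(x; \overline{y})$ let $K = K_\varphi < \omega$ be the constant such that $\varphi(\mathfrak{C}; \overline{b})$ is a union of at most $K$ $<$-convex subsets of $\mathfrak{C}$ for every $\overline{b} \in \mathfrak{C}^{\leng(\overline{y})}$. I want to show $|S_\varphi(B)| \le (2K+1)|B|$ for every nonempty finite $B \subseteq \mathfrak{C}^{\leng(\overline{y})}$, which is precisely VC-density $\le 1$.

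The first step is to replace each type by a realizing element. Since $x$ is a singleton and each $p \in S_\varphi(B)$ is consistent, choose $a_p \in \mathfrak{C}$ realizing $p$; distinct types are separated by some instance $\varphi(x; \overline{b})$. List the chosen representatives in $<$-increasing order as $a_{p_1} < a_{p_2} < \cdots < a_{p_m}$, where $m = |S_\varphi(B)|$, so the goal reduces to bounding $m$.

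The key combinatorial step is a sign-change count. For a fixed $\overline{b} \in B$, consider the $0/1$ sequence recording, for $j = 1, \ldots, m$, whether $a_{p_j} \in \varphi(\mathfrak{C}; \overline{b})$. Because $\varphi(\mathfrak{C}; \overline{b})$ is a union of at most $K$ $<$-convex sets and the $a_{p_j}$ are $<$-increasing, the indices at which this sequence equals $1$ form a union of at most $K$ intervals of $\{1, \ldots, m\}$, since a convex set meets an increasing sequence in a block of consecutive indices. Hence this sequence changes value at most $2K$ times. Summing over $\overline{b} \in B$, the total number of coordinate changes across all consecutive pairs $(a_{p_j}, a_{p_{j+1}})$ is at most $2K|B|$. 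On the other hand, consecutive representatives realize distinct types, so each of the $m - 1$ gaps forces at least one coordinate to change, giving $m - 1 \le 2K|B|$. Thus $|S_\varphi(B)| \le 2K|B| + 1 \le (2K+1)|B|$, establishing VC-density $\le 1$.

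The main obstacle is really just a point requiring care rather than a genuine difficulty: one must verify the sign-change bound, namely that membership in a union of at most $K$ convex sets can switch at most $2K$ times along an increasing sequence, which rests on the observation that a convex set meets any increasing sequence in an interval of indices. A secondary point is ensuring the setup is legitimate, i.e. that $\mathfrak{C} \models T$ is convexly orderable and that the single constant $K_\varphi$ supplied by convex orderability applies uniformly to all parameters $\overline{b} \in \mathfrak{C}^{\leng(\overline{y})}$, hence to every finite $B$; the degenerate case $B = \emptyset$ (a single type) is trivial.
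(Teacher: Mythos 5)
Your proof is correct and follows essentially the same route as the paper's: fix a convex ordering, observe that each $\varphi(\mathfrak{C};\overline{b})$ contributes at most $2K$ ``endpoints'' (sign changes), and conclude $|S_\varphi(B)| \le 2K\cdot|B|+1$. Your version merely makes the paper's terse endpoint-counting rigorous, by enumerating realizations of the types in increasing order and noting that consecutive distinct types force a sign change for some parameter in $B$.
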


\begin{proof}
 Fix $M \models T$, so $M$ is convexly orderable, and fix $<$ an ordering on $M$ witnessing this fact.  By definition, there exists $K < \omega$ such that, for all $\overline{b} \in M^{\leng(\overline{y})}$, $\varphi(M; \overline{b})$ is a union of at most $K$ $<$-convex subsets of $M$.  Therefore, there are at most $2K$ ``endpoints'' (i.e., the truth value of $\varphi(x; \overline{b})$ alternates at most $2K$ times).  So if we take any finite $B \subseteq M^{\leng(\overline{y})}$, there are at most $2K \cdot |B|$ ``endpoints'' from all the $\varphi(M; \overline{b})$ as we range over all $\overline{b} \in B$.  Therefore, we get that $|S_\varphi(B)| \le 2K \cdot |B| + 1$, hence $\varphi$ has VC-density $\le 1$.
\end{proof}

The proof of Proposition 3.2 in \cite{DGL} yields: If $T$ is a theory such that all formulas of the form $\varphi(x; \overline{y})$ have VC-density $\le 1$, then $T$ is dp-minimal.  Therefore, we get the following corollary:

\begin{cor}\label{Cor_ConvexlyOrderableimpliesDpMin}
 If $T$ is convexly orderable, then $T$ is dp-minimal.
\end{cor}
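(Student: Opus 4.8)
The plan is to obtain the corollary by composing the two facts recorded immediately above it, with convex orderability entering only through the VC-density bound. First I would invoke Proposition \ref{Prop_ConvexlyOrderableimpliesVCDensityOne}: since $T$ is convexly orderable, every partitioned formula $\varphi(x; \overline{y})$ in which the object variable $x$ is a single variable has VC-density $\le 1$. This is the sole point at which the hypothesis of convex orderability is used, and it delivers exactly the input demanded by the second ingredient.

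Next I would appeal to the quoted consequence of the proof of Proposition 3.2 in \cite{DGL}: a theory in which all singleton-variable formulas $\varphi(x; \overline{y})$ have VC-density $\le 1$ is dp-minimal. Chaining the two implications yields convexly orderable $\Rightarrow$ VC-density $\le 1$ on all singleton-variable formulas $\Rightarrow$ dp-minimal, which is precisely the assertion to be proved. Because both steps are already established, the corollary follows by a one-line composition and requires no additional calculation.

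The genuine content being imported here is the second implication, and that is where the real obstacle lies: dp-minimality is a statement about the impossibility of an ict-pattern of depth two in a single object variable, whereas VC-density $\le 1$ is a counting bound on $|S_\varphi(B)|$, so the nontrivial work is the combinatorial translation between these two formulations, which is carried out in \cite{DGL}. Were that reference unavailable, I would instead argue by contraposition: from a witness to dp-rank exceeding one in a single variable — two mutually indiscernible sequences together with a singleton $c$ splitting both — one extracts a single formula $\varphi(x; \overline{y})$ with $x$ a singleton and a family of parameter sets on which $|S_\varphi(B)|$ grows faster than linearly in $|B|$, contradicting VC-density $\le 1$. Given the available citation, however, the clean route is simply to invoke it, so the proof reduces to the composition described above.
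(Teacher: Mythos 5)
Your proposal is correct and follows exactly the paper's route: the corollary is obtained by composing Proposition \ref{Prop_ConvexlyOrderableimpliesVCDensityOne} with the stated consequence of the proof of Proposition 3.2 in \cite{DGL}, which is precisely what the paper does (the text immediately preceding the corollary serves as its proof). Your additional sketch of a contrapositive argument is a reasonable gloss but is not needed; the one-line composition suffices.
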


Convexly orderability does not characterize dp-minimality.  Consider the following example taken from Proposition 3.7 in \cite{DGL}:

\begin{expl}\label{Expl_dpminnotimpliesCO}
 Let $L = \{ P_n : n < \omega_1 \}$ for $P_n$ unary predicates.  For any $I, J \subset \omega_1$ finite and disjoint, let
 \[
  \sigma_{I,J} = \exists x \left( \bigwedge_{i \in I} P_i(x) \wedge \bigwedge_{j \in J} \neg P_j(x) \right)
 \]
 and let $T = \{ \sigma_{I,J} : \text{ for all such } I, J \}$.  As shown in \cite{DGL}, $T$ is complete, has quantifier elimination, and is dp-minimal.  However, we claim that $T$ has no convexly orderable structure.  Suppose, by way of contradiction, that $M \models T$ is convexly orderable, witnessed by $<$.  By pigeon-hole principle, there exists $N < \omega$ and $I \subseteq \omega_1$ with $|I| = \aleph_1$ such that $P_i(M)$ is the union of at most $N$ $<$-convex subsets of $M$ for all $i \in I$.  Hence, for each $i \in I$, $P_i(M)$ has at most $2N$ ``endpoints.''  Therefore, for any $k < \omega$ and any finite $I_0 \subseteq I$ with $|I_0| = k$, there are, at most, $2Nk+1$ $\Delta_{I_0}$-types over $\emptyset$ (where $\Delta_{I_0} = \{ P_i(x) : i \in I_0 \}$).  However, since $M \models \sigma_{I_1,I_0 - I_1}$ for all $I_1 \subseteq I_0$, there are $2^k$ $\Delta_{I_0}$-types over $\emptyset$.  Hence $2^k \le 2Nk+1$ for all $k < \omega$, a contradiction.
\end{expl}

We should remark that if we replace $L$ with $L = \{ P_n : n < \omega \}$ and build the corresponding theory $T$, then $T$ is actually VC-minimal, hence also convexly orderable.  In fact, the following is a VC-instantiable family for $T$:
\[
 \left\{ \bigwedge_{i < n} P_i(x)^{s(i)} \wedge P_n(x) : n < \omega, s \in {}^n 2 \right\}.
\]

\section{Variants on the Definition of VC-Minimality}\label{Section_VCVar}


We say a theory $T$ has \emph{low VC-density} if, for every formula $\varphi(\overline{x}; \overline{y})$, $\varphi$ has VC-density $\le \leng(\overline{x})$ (see also Section 3.2 of \cite{ADHMS}, where they call refer to this property by $\mathrm{vc}^T(n) \le n$ for all $n < \omega$).  One of the main open questions that motivates the work in this paper, asked by Aschenbrenner, Dolich, Haskell, MacPherson, and Starchenko in \cite{ADHMS}, is the following:

\textbf{Question.} If $T$ is VC-minimal, then does $T$ have low VC-density?

As shown in Proposition \ref{Prop_ConvexlyOrderableimpliesVCDensityOne}, this holds for formulas $\varphi(x; \overline{y})$ where $x$ is a singleton even when the theory $T$ is convexly orderable.  The problem comes in when one tries to show this for a formula with more than one free variable.  One means of addressing this issue is changing slightly the definition of VC-minimality.

In this section, we suggest a few modifications to the definition of VC-minimality.  We will first consider a weakening of the definition by loosening a requirement on the skeleton.  Then, we consider a strengthening of the definition by putting a constraint on the parameter variables in the skeleton.

\subsection{Weakly VC-Minimal Theories}\label{Subsection_WeaklyVCmin}


We first turn our attention to weakly VC-minimal theories.  This modification simply changes our requirement on the skeleton of the theory.  We no longer require that the entire skeleton has independence dimension $\le 1$, only that each individual formula in the skeleton does.  The main goal of this section will be to show that unstable weakly VC-minimal theories interpret an infinite linear order.

\begin{defn}\label{Defn_weakVCmin}
 We say that $T$ is \emph{weakly VC-minimal} if, for all formulas $\varphi(x; \overline{y})$ and all $\overline{b} \in \mathfrak{C}^{\leng(\overline{y})}$, there exists formulas $\psi_0(x; \overline{z}_0), ..., \psi_{n-1}(x; \overline{z}_{n-1})$ each with independence dimension $\le 1$ and $\overline{c}_0, ..., \overline{c}_{n-1}$ from $\mathfrak{C}$ of the appropriate length such that $\varphi(x; \overline{b})$ is a boolean combination of the $\psi_i(x; \overline{c}_i)$'s.
\end{defn}

If $T$ is VC-minimal and $\Phi$ is a VC-minimal instantiable family witnessing this, then, in particular, each formula in $\Phi$ has independence dimension $\le 1$ (by itself).  Hence, $T$ is also weakly VC-minimal.  To see that this is actually a strictly weaker notion, consider Example \ref{Expl_dpminnotimpliesCO} above.  This example is not VC-minimal, but it is weakly VC-minimal, since each $\varphi(x;y) = P_i(x)$ has independence dimension $\le 1$.  This also shows that weakly VC-minimal theories are not necessarily convexly orderable.

We consider a general lemma to simplify cases where we have instances of formulas equivalent to instances of formulas from a fixed set.

\begin{lem}\label{Lem_FinSubsetofFmlas}
 Let $\Psi = \{ \psi_i(x; \overline{z}_i) : i \in I \}$ be any set of formulas and let $\varphi(x; \overline{y})$ be a formula.  Suppose that, for all $\overline{b} \in \mathfrak{C}^{\leng(\overline{y})}$, there exists $i \in I$ and $\overline{c} \in \mathfrak{C}^{\leng(\overline{z}_i)}$ so that $\varphi(\mathfrak{C}; \overline{b}) = \psi_i(\mathfrak{C}; \overline{c})$.  Then, there exists a finite $I_0 \subseteq I$ such that, for all $\overline{b} \in \mathfrak{C}^{\leng(\overline{y})}$, there exists an $i \in I_0$ and a $\overline{c} \in \mathfrak{C}^{\leng(\overline{z}_i)}$ so that $\varphi(x; \overline{b})$ is equivalent to $\psi_i(x; \overline{c})$.
\end{lem}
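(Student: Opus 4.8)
The plan is to run a compactness/type-omitting argument, the crux of which is to recognize that for each single index $i \in I$ the relevant covering condition is expressible by one first-order formula in the free variables $\overline{y}$. Specifically, for each $i \in I$ set
\[
 \chi_i(\overline{y}) := \exists \overline{z}_i \, \forall x \, \bigl( \varphi(x; \overline{y}) \leftrightarrow \psi_i(x; \overline{z}_i) \bigr).
\]
Since $x$ and $\overline{z}_i$ are finite tuples, $\chi_i(\overline{y})$ is a genuine first-order $L$-formula. The content of $\chi_i(\overline{b})$ is precisely that there is some $\overline{c} \in \mathfrak{C}^{\leng(\overline{z}_i)}$ with $\varphi(\mathfrak{C}; \overline{b}) = \psi_i(\mathfrak{C}; \overline{c})$, because equality of these two subsets of $\mathfrak{C}^{\leng(x)}$ in the monster model is the same as the logical equivalence $\forall x \, (\varphi(x; \overline{b}) \leftrightarrow \psi_i(x; \overline{c}))$, which is exactly the form of conclusion we want.

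With this translation, the hypothesis says exactly that every $\overline{b} \in \mathfrak{C}^{\leng(\overline{y})}$ satisfies $\chi_i(\overline{b})$ for at least one $i \in I$; equivalently, the partial type
\[
 \Sigma(\overline{y}) = \{ \neg \chi_i(\overline{y}) : i \in I \}
\]
is omitted in $\mathfrak{C}$, in the sense that no tuple realizes all of it. The desired conclusion, by contrast, is that there is a finite $I_0 \subseteq I$ with $\mathfrak{C} \models \forall \overline{y} \bigvee_{i \in I_0} \chi_i(\overline{y})$, i.e.\ that some finite subset $\{ \neg \chi_i(\overline{y}) : i \in I_0 \}$ of $\Sigma$ is inconsistent.

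So the proof reduces to the standard saturation principle: any partial type over $\emptyset$ that is finitely satisfiable is realized in the monster model $\mathfrak{C}$. As $\Sigma(\overline{y})$ is not realized, it cannot be finitely satisfiable, so some finite $\{ \neg \chi_i(\overline{y}) : i \in I_0 \}$ is inconsistent. This $I_0$ is the finite index set sought: inconsistency means every $\overline{b}$ satisfies $\chi_i(\overline{b})$ for some $i \in I_0$, which unwinds to the statement that $\varphi(x; \overline{b})$ is equivalent to $\psi_i(x; \overline{c})$ for some $i \in I_0$ and some $\overline{c}$.

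I do not anticipate a serious obstacle, and the only two points requiring care are the following. First, one must confirm each $\chi_i$ is a finitary first-order formula, which hinges on $x$ and each $\overline{z}_i$ being finite tuples. Second, and more delicately, although $I$ may have large cardinality, the collection $\{ \neg \chi_i(\overline{y}) : i \in I \}$ comprises at most $|L| + \aleph_0$ distinct $L$-formulas in the fixed finite tuple $\overline{y}$; hence $\Sigma(\overline{y})$ is a partial type of size strictly below the saturation of $\mathfrak{C}$, so the realization principle genuinely applies and the contradiction goes through regardless of $|I|$.
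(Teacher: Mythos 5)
Your proof is correct and is essentially the paper's own argument: the paper also applies compactness (saturation of $\mathfrak{C}$) to the very same partial type $\Sigma(\overline{y}) = \{ \neg \exists \overline{z}_i \forall x ( \varphi(x; \overline{y}) \leftrightarrow \psi_i(x; \overline{z}_i) ) : i \in I \}$, concluding that some finite subset is inconsistent. Your additional remark that $\Sigma$ contains at most $|L| + \aleph_0$ distinct formulas, so saturation applies regardless of $|I|$, is a correct fleshing-out of the detail the paper leaves implicit.
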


\begin{proof}
 Use compactness on the following partial type in $\overline{y}$ over $\emptyset$ to obtain the desired result:
 \[
  \Sigma(\overline{y}) = \{ \neg \exists \overline{z}_i \forall x ( \varphi(x; \overline{y}) \leftrightarrow \psi_i(x; \overline{z}_i) ) : i \in I \}.
 \]
\end{proof}

This has the following corollary on weak VC-minimality.

\begin{cor}\label{Cor_FormulawiseWeakVCMin}
 If $T$ is weakly VC-minimal and $\varphi(x; \overline{y})$ is a formula, then there exists finitely many formulas $\Psi_0 = \{ \psi_i(x; \overline{z}_i) : i \in I_0 \}$, each a boolean combination of formulas with independence dimension $\le 1$ (in the variable $x$), such that, for any $\overline{b} \in \mathfrak{C}^{\leng(\overline{y})}$, there exists $i \in I_0$ and $\overline{c} \in \mathfrak{C}^{\leng(\overline{z}_i)}$ such that $\varphi(x; \overline{b})$ is equivalent to $\psi_i(x; \overline{c})$.
\end{cor}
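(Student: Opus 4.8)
The plan is to reduce everything to Lemma \ref{Lem_FinSubsetofFmlas} by packaging all possible boolean combinations of independence-dimension-$\le 1$ formulas into a single indexed family and then letting compactness, via that lemma, collapse the non-uniformity in $\overline{b}$ to a finite list. First I would let $\Psi = \{ \psi_i(x; \overline{z}_i) : i \in I \}$ enumerate every $L$-formula $\psi(x; \overline{z})$ that is syntactically a boolean combination of finitely many formulas $\chi_0(x; \overline{z}_0), \ldots, \chi_{m-1}(x; \overline{z}_{m-1})$, each having $\ID \le 1$ in the variable $x$, where $\overline{z}$ is the concatenation of the $\overline{z}_j$. Since $L$ is fixed, there are only set-many $L$-formulas, so $\Psi$ is a genuine set and the enumeration makes sense.

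Next I would verify that $\varphi$ and this $\Psi$ satisfy the hypothesis of Lemma \ref{Lem_FinSubsetofFmlas}. Fix $\overline{b} \in \mathfrak{C}^{\leng(\overline{y})}$. By the definition of weak VC-minimality there are $\ID \le 1$ formulas $\chi_0, \ldots, \chi_{m-1}$ and parameters $\overline{c}_0, \ldots, \overline{c}_{m-1}$ so that $\varphi(x; \overline{b})$ is a boolean combination of the $\chi_j(x; \overline{c}_j)$. The logical shape of this boolean combination, together with its component formulas $\chi_j$, is itself one of the formulas $\psi_i(x; \overline{z}_i) \in \Psi$; taking $\overline{c} = \overline{c}_0 \concat \cdots \concat \overline{c}_{m-1}$ then gives $\varphi(\mathfrak{C}; \overline{b}) = \psi_i(\mathfrak{C}; \overline{c})$. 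Thus for every $\overline{b}$ there is $i \in I$ and $\overline{c} \in \mathfrak{C}^{\leng(\overline{z}_i)}$ with $\varphi(\mathfrak{C}; \overline{b}) = \psi_i(\mathfrak{C}; \overline{c})$, which is exactly what Lemma \ref{Lem_FinSubsetofFmlas} requires.

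Finally I would apply the lemma to obtain a finite $I_0 \subseteq I$ such that for every $\overline{b}$ there is $i \in I_0$ and $\overline{c} \in \mathfrak{C}^{\leng(\overline{z}_i)}$ with $\varphi(x; \overline{b})$ equivalent to $\psi_i(x; \overline{c})$, and set $\Psi_0 = \{ \psi_i(x; \overline{z}_i) : i \in I_0 \}$. Every member of $\Psi_0$ lies in $\Psi$, hence is a boolean combination of $\ID \le 1$ formulas by construction, so the finiteness of $\Psi_0$ and the required structural form of its members hold simultaneously.

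There is essentially no hard step here; the only point demanding care is the bookkeeping in the first paragraph. One must ensure that letting the component formulas, their number $m$, and the boolean pattern all vary with $\overline{b}$ is absorbed by quantifying over the single set-sized family $\Psi$, so that the per-$\overline{b}$ non-uniformity supplied by the definition of weak VC-minimality is precisely the non-uniformity that Lemma \ref{Lem_FinSubsetofFmlas} is designed to reduce to a finite family.
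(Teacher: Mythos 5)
Your proposal is correct and follows exactly the paper's own proof: the paper likewise takes $\Psi$ to be all boolean combinations of independence-dimension-$\le 1$ formulas in the variable $x$ and then applies Lemma \ref{Lem_FinSubsetofFmlas}. Your version simply spells out the bookkeeping (that weak VC-minimality supplies the hypothesis of the lemma for each $\overline{b}$) that the paper leaves implicit.
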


\begin{proof}
 Let $\Psi$ be all boolean combinations of formulas with independence dimension $\le 1$ in the variable $x$, then use Lemma \ref{Lem_FinSubsetofFmlas} to conclude.
\end{proof}


For $\varphi(x; \overline{y})$ any formula, the relation $\le_\varphi$ defined by
\[
 \overline{b} \le_\varphi \overline{c} \text{ if and only if } \models \forall x (\varphi(x;\overline{b}) \rightarrow \varphi(x;\overline{c}))
\]
is a quasi-ordering on $\mathfrak{C}^{\leng(\overline{y})}$.
That is, $\le_\varphi$ induces a partial order on $\mathfrak{C}^{\leng(\overline{y})} / \equiv_\varphi$, where $\overline{b} \equiv_\varphi \overline{c}$ if and only if $\overline{b} \le_\varphi \overline{c}$
and $\overline{c} \le_\varphi \overline{b}$.

\begin{lem}\label{Lem_formuladichotomy}
 If $\varphi(x; \overline{y})$ has independence dimension $\le 1$ and is unstable, then $\varphi(x; \overline{y})$ has
the strict order property.  Furthermore, there is a definable subset $D \subseteq \mathfrak{C}^{\leng(\overline{y})}$ such that $\le_\varphi$ induces an
infinite linear ordering on $D / \equiv_\varphi$.
\end{lem}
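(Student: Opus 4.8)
The plan is to first extract the strict order property from instability together with $\ID(\varphi) \le 1$, and then to carve out the required definable set $D$ by pinning down one common ``interior'' point and one common ``exterior'' point, which is exactly what forces $\le_\varphi$ to be total.

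Since $\varphi$ is unstable it has the order property, so by compactness (each finite suborder of $\omega+1$ is a finite linear order along which the order-property configuration can be realized) I can find $(a_i, \overline{b}_i)_{i \in \omega+1}$ with $\models \varphi(a_i; \overline{b}_j)$ if and only if $i < j$. Writing $B_j = \varphi(\mathfrak{C}; \overline{b}_j)$, this says $a_i \in B_j$ iff $i < j$. The engine of the argument is the four-case dichotomy coming directly from $\ID(\varphi) \le 1$: no two-element family is independent, so for any parameters $\overline{b}, \overline{c}$ one of $B \cap C = \emptyset$, $B \subseteq C$, $C \subseteq B$, or $B \cup C = \mathfrak{C}$ must hold (where $B = \varphi(\mathfrak{C};\overline{b})$, $C = \varphi(\mathfrak{C};\overline{c})$). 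For $1 \le j < k < \omega$ I rule out three of the four cases: $a_0 \in B_j \cap B_k$ excludes disjointness, $a_j \in B_k \setminus B_j$ excludes $B_k \subseteq B_j$, and $a_k \notin B_j \cup B_k$ excludes the covering case. The survivor is $B_j \subseteq B_k$, proper since $a_j \in B_k \setminus B_j$. Thus $(B_j)_{1 \le j < \omega}$ is a strictly increasing chain, which is precisely the strict order property for $\varphi$.

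For the ``furthermore'' I make $\le_\varphi$ total on a definable set by definably excluding exactly the two troublesome cases, disjointness and covering. Put $p = a_0$ and $q = a_\omega$, and set $D = \{ \overline{y} : \models \varphi(p; \overline{y}) \wedge \neg \varphi(q; \overline{y}) \}$, a set definable over the parameters $p, q$. For any $\overline{b}, \overline{c} \in D$ we have $p \in \varphi(\mathfrak{C};\overline{b}) \cap \varphi(\mathfrak{C};\overline{c})$ and $q \notin \varphi(\mathfrak{C};\overline{b}) \cup \varphi(\mathfrak{C};\overline{c})$, so both the disjoint and the covering cases of the dichotomy are impossible; what remains is $\varphi(\mathfrak{C};\overline{b}) \subseteq \varphi(\mathfrak{C};\overline{c})$ or the reverse inclusion. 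Hence $\le_\varphi$ is total on $D$ and, being already a partial order on the quotient, induces a linear order on $D / \equiv_\varphi$. Finally, for every finite $j \ge 1$ we have $p = a_0 \in B_j$ (as $0 < j$) and $q = a_\omega \notin B_j$ (as $\omega \not< j$), so each $\overline{b}_j \in D$; since the $B_j$ strictly increase, the classes $[\overline{b}_j]_{\equiv_\varphi}$ are pairwise distinct, so $D / \equiv_\varphi$ is infinite.

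The main obstacle is choosing $D$ so that it is simultaneously definable, collapses the $\ID(\varphi) \le 1$ dichotomy to genuine comparability, and still contains an infinite $\le_\varphi$-chain. The resolution is that excluding disjointness and covering needs only a single shared point inside all sets of $D$ and a single shared point outside all of them; the subtle part is that a point lying outside every $B_j$ exists only after extracting the configuration along $\omega + 1$ rather than $\omega$, since the ``top'' witness $a_\omega$ is exactly the element outside all finite stages while $a_0$ sits inside them.
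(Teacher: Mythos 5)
Your proof is correct and follows essentially the same route as the paper: extract an order-property configuration, use the four-case dichotomy from $\ID(\varphi) \le 1$ to force a proper inclusion chain (hence the strict order property), and then cut out a definable set $D$ on which disjointness and covering are excluded so that $\le_\varphi$ becomes total. The only cosmetic differences are that the paper indexes the configuration by $\mathbb{Q}$ rather than $\omega+1$ and defines $D$ as the $\le_\varphi$-interval $\{ \overline{b} : \overline{b}_0 \le_\varphi \overline{b} \le_\varphi \overline{b}_1 \}$ instead of pinning one interior and one exterior point, which amounts to the same exclusion argument.
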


\begin{proof}
 Fix any formula $\varphi(x; \overline{y})$ of independence dimension $\le 1$
that is unstable.  As it has the order property, choose $\langle c_q : q \in \mathbb{Q} \rangle$ and $\langle \overline{b}_r : r \in \mathbb{Q} \rangle$ such that $\varphi(c_q; \overline{b}_r)$ holds if
and only if $q < r$.  Now, whenever $q < r$, the sets $\varphi(\mathfrak{C}; \overline{b}_q)$
and $\varphi(\mathfrak{C}; \overline{b}_r)$ are not disjoint, do not cover $\mathfrak{C}$, and
$\varphi(\mathfrak{C}; \overline{b}_r)$ is not a subset of $\varphi(\mathfrak{C}; \overline{b}_q)$.
As $\varphi(x; \overline{y})$ has independence dimension $\le 1$, it follows
that $\varphi(\mathfrak{C}; \overline{b}_q)$ is a proper subset of $\varphi(\mathfrak{C}; \overline{b}_r)$
whenever $q < r$.  Thus, $\varphi(x; \overline{y})$ has the strict order property.

 For the second part of the lemma, take $D = \{ \overline{b} \in \mathfrak{C}^{\leng(\overline{y})} : \overline{b}_0 \le_\varphi \overline{b} \le_\varphi \overline{b}_1 \}$.  Arguing as above,
$\le_\varphi$ induces a linear ordering on the infinite set $D / \equiv_\varphi$.
\end{proof}

\begin{thm}\label{Thm_theorydichotomy}
 If $T$ is weakly VC-minimal, then either $T$ is stable or $T$ interprets an infinite linear order.
\end{thm}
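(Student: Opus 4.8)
The plan is to prove the contrapositive-style dichotomy by assuming $T$ is not stable and then producing a $\emptyset$-definable linear order from Lemma \ref{Lem_formuladichotomy}. The whole argument funnels toward a single goal: to exhibit one formula of independence dimension $\le 1$ that is unstable, since Lemma \ref{Lem_formuladichotomy} then hands us the interpreted infinite linear order directly. So the work lies in transferring instability of $T$ down to instability of an individual skeleton formula.

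First I would reduce to a single free variable. Invoking the classical fact that $T$ is stable if and only if every formula $\varphi(x; \overline{y})$ with $\leng(x) = 1$ is stable, the failure of stability yields an unstable single-variable formula $\varphi(x; \overline{y})$; fix a witness to the order property, say sequences $\langle a_k : k < \omega \rangle$ and $\langle \overline{b}_l : l < \omega \rangle$ with $\models \varphi(a_k; \overline{b}_l)$ if and only if $k < l$. This single-variable reduction matters because weak VC-minimality (Definition \ref{Defn_weakVCmin}) only constrains formulas $\varphi(x; \overline{y})$ with $x$ a singleton, so we cannot afford to work with an unstable formula in several object variables.

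Next I would apply Corollary \ref{Cor_FormulawiseWeakVCMin} to obtain finitely many formulas $\psi_i(x; \overline{z}_i)$, $i \in I_0$, each a boolean combination of formulas of independence dimension $\le 1$, such that every instance $\varphi(x; \overline{b}_l)$ is equivalent to some $\psi_{i(l)}(x; \overline{c}_l)$. Since $I_0$ is finite, pigeonhole gives an index $i^* \in I_0$ used for infinitely many $l$; restricting the parameter sequence to those $l$ and reindexing shows that $\psi_{i^*}$ inherits the order property from $\varphi$, hence is unstable. Now $\psi_{i^*}$ is a boolean combination of finitely many formulas $\chi_1, \dots, \chi_m$, each of independence dimension $\le 1$. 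Because the class of stable formulas is closed under boolean combinations, the instability of $\psi_{i^*}$ forces at least one $\chi_j$ to be unstable. With $\chi_j$ of independence dimension $\le 1$ and unstable, Lemma \ref{Lem_formuladichotomy} produces a definable set $D$ on which $\le_{\chi_j}$ induces an infinite linear order on $D / \equiv_{\chi_j}$; as $D$, $\equiv_{\chi_j}$, and $\le_{\chi_j}$ are all definable, $T$ interprets this infinite linear order, which completes the argument.

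The two steps I expect to require the most care are the single-variable reduction and the descent of instability through the boolean combination. For the former one leans on the standard type-counting principle that $1$-types govern stability; for the latter one must apply the closure of stable formulas under boolean operations attentively, since the $\chi_j$ carry distinct parameter variables and the pigeonhole must be arranged so that the surviving subsequence genuinely witnesses the order property for a \emph{single} $\psi_{i^*}$ and thereby for a single $\chi_j$.
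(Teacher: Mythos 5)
Your proof is correct and follows essentially the same route as the paper's: both arguments combine the single-variable criterion for stability, Corollary \ref{Cor_FormulawiseWeakVCMin} with a pigeonhole transfer of the order property, closure of stable formulas under boolean combinations, and Lemma \ref{Lem_formuladichotomy} to produce the interpreted order. The only difference is organizational (you argue contrapositively from instability of $T$, while the paper splits into cases according to whether every formula of independence dimension $\le 1$ is stable), which does not change the substance.
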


\begin{proof}
 In any complete theory $T$, the set of
stable formulas is closed under boolean combinations and
$T$ is stable if and only if every formula $\varphi(x; \overline{y})$
with $\leng(x)=1$ is stable (see e.g., II.2.13 of \cite{Shelah}).
Thus, if every formula of independence
dimension $\le 1$ were stable, then every boolean combination of
such formulas would be stable.  In particular, given any formula
$\varphi(x; \overline{y})$, each of the finitely many formulas in $\Psi_0$ from
Corollary \ref{Cor_FormulawiseWeakVCMin} would be stable as well.  We argue that in this case,
$\varphi(x; \overline{y})$ cannot have the order property. Indeed, if
$\langle c_i : i < \omega \rangle$ and $\langle \overline{b}_j : j < \omega \rangle$ satisfied $\varphi(c_i; \overline{b}_j)$ if and only if $i<j$,
then replace each $\varphi(x; \overline{b}_j)$ by some $\psi(x; \overline{d}_j)$
equivalent to it, with $\psi \in \Psi_0$. The pigeon-hole principle
would then imply that some $\psi \in \Psi_0$ would have the order
property, contradicting its stability.  Thus, if every formula of
independence dimension $\le 1$ is stable, then every formula
$\varphi(x; \overline{y})$ with $\leng(x) = 1$ is stable, hence $T$ is stable.

 On the other hand, if some formula $\varphi(x; \overline{y})$ of independence
dimension $\le 1$ were unstable, then by Lemma \ref{Lem_formuladichotomy}
$\le_\varphi$ induces a linear ordering on the
infinite set $D / \equiv_\varphi$.
\end{proof}

As an application of this dichotomy, we turn to the Kueker Conjecture.  
Recall that the Kueker Conjecture asserts
that if $T$ is a complete theory in a countable language
with the property that every uncountable model is
$\aleph_0$-saturated, then $T$ is categorical in some infinite
power.

\begin{cor}\label{Cor_KuekerConj}
 If $T$ is a weakly VC-minimal theory in a
countable language such that every uncountable model is $\aleph_0$-saturated,
then $T$ is categorical in some infinite power.
\end{cor}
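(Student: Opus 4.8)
The plan is to invoke the dichotomy from Theorem \ref{Thm_theorydichotomy} together with the known resolutions of the Kueker Conjecture in the two resulting cases. First I would observe that the hypothesis---that every uncountable model of $T$ is $\aleph_0$-saturated, in a countable language---is exactly the hypothesis of the Kueker Conjecture, so it suffices to show that $T$ is categorical in some infinite power under the additional assumption that $T$ is weakly VC-minimal. By Theorem \ref{Thm_theorydichotomy}, such a $T$ is either stable or it interprets an infinite linear order.

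In the stable case, I would appeal to the theorem (due to Hrushovski) that the Kueker Conjecture holds for stable theories: a stable theory in a countable language all of whose uncountable models are $\aleph_0$-saturated is in fact $\aleph_0$-categorical (and hence categorical in every infinite power, or at least in some infinite power, which is all that is required). Thus the stable branch is handled by citing this result directly, with the hypothesis on saturation of uncountable models feeding straight into it.

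In the unstable case, Theorem \ref{Thm_theorydichotomy} gives that $T$ interprets an infinite linear order. Here I would use the result (due to Shelah, or following from the analysis of theories interpreting a linear order) that a theory which interprets an infinite linear order has, in every uncountable cardinality, a model that is \emph{not} $\aleph_0$-saturated---indeed one can arrange a definable infinite discretely- or densely-ordered set realizing too few cuts to be saturated. This directly contradicts the standing hypothesis that every uncountable model is $\aleph_0$-saturated. Hence the unstable case cannot occur, and we are always in the stable case.

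The main obstacle is ensuring that the cited resolutions of the Kueker Conjecture apply verbatim under our precise hypotheses, and in particular that interpreting an infinite linear order genuinely produces a non-$\aleph_0$-saturated uncountable model. I would therefore be careful to state that an infinite definable (or interpretable) linear order always admits, in large cardinalities, elementary extensions omitting a cut-type over a countable set, which is the standard way the linear order obstructs universal $\aleph_0$-saturation; combining this with Hrushovski's theorem for the stable case completes the argument. The remaining steps---verifying completeness and countability of the language, which are given---are routine.
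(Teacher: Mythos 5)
Your stable case matches the paper's argument (cite Hrushovski), but your unstable case contains a genuine error. You claim that a theory interpreting an infinite linear order must have, in every uncountable cardinality, a model that is not $\aleph_0$-saturated, so that the unstable branch of Theorem \ref{Thm_theorydichotomy} is vacuous under the Kueker hypothesis. This is false: the theory of dense linear orders without endpoints interprets an infinite linear order (it is one), yet every one of its models, uncountable or not, is $\aleph_0$-saturated --- by quantifier elimination, a type over a finite parameter set is determined by its order relations to the parameters, and all such types are realized by density and the absence of endpoints. Moreover, DLO is o-minimal, hence VC-minimal, hence weakly VC-minimal, it is unstable, and it satisfies the Kueker hypothesis (and is $\aleph_0$-categorical); so the unstable branch of the dichotomy genuinely occurs under the standing hypotheses and cannot be argued away by contradiction. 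Your proposed ``standard result'' about omitting cut-types in large models simply does not hold once the hypothesis is saturation over \emph{finite} sets only.

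The paper's proof handles both branches with a single citation: Hrushovski's paper \cite{Hrushovski} proves the Kueker Conjecture not only for (countable) stable theories but also for theories admitting an infinite interpretable linear ordering. So in the unstable case you should invoke that second half of Hrushovski's theorem --- which is a substantive categoricity result, not a vacuity argument --- rather than attempt to contradict the saturation hypothesis. With that replacement, your outline coincides with the paper's proof.
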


\begin{proof}
 In \cite{Hrushovski}, Hrushovski proves that the Kueker Conjecture
holds for (countable) stable theories and for theories admitting an infinite
 interpretable linear ordering, so the corollary follows immediately
from Theorem \ref{Thm_theorydichotomy}.
\end{proof}


What other dependent theories have this stable / infinite interpretable linear order dichotomy?  For any such theory, the Kueker Conjecture certainly holds by the above argument.

Unfortunately, we have no control over the VC-density of formulas in a weakly VC-minimal theory.

\begin{expl}\label{Expl_ManyOrdersDLO}
 Fix $n > 1$ and let $M = \mathbb{Q}^n$.  For each $i < n$, let $<_i$ be a binary relation on $M$ defined by $(x_0, ..., x_{n-1}) <_i (y_0, ..., y_{n-1})$ if and only if $x_i < y_i$.  Finally, let $T = \Th(M; <_i)_{i < n}$.  In much the same way as one shows it for $\Th(\mathbb{Q}; <)$, we can see that $T$ has quantifier elimination.  Therefore, since $x <_i y$ has independence dimension one for all $i < n$, we see that $T$ is weakly VC-minimal.  However, $T$ is not dp-minimal, witnessed by the formulas $y <_0 x <_0 z$ and $y <_1 x <_1 z$.  Moreover, one can check that the formula
 \[
  \varphi(x; y_0, ..., y_{n-1}) = \bigwedge_{i < n} x <_i y_i
 \]
 has VC-density $n$ in $T$.
\end{expl}

Therefore, there are weakly VC-minimal theories with a formula $\varphi(x; \overline{y})$ (with $x$ a singleton) with arbitrarily large VC-density.  In fact, the dp-rank of $T$ (as defined in \cite{KOU}) is at least $n$, showing that there are weakly VC-minimal theories with arbitrarily large dp-rank.  As another example, notice that Example 1.3 in \cite{KOU} is weakly VC-minimal but is not dp-minimal.  Clearly weakly VC-minimal theories are dependent.  Consider the following modification to Example \ref{Expl_ManyOrdersDLO} showing that dependence does not imply weak VC-minimality:

\begin{expl}\label{Expl_ModifiedDLO}
 Let $M = \mathbb{Q}^2$ and let $<$ be the binary relation $(x_0, x_1) < (y_0, y_1)$ if and only if $x_0 < y_0$ and $x_1 < y_1$.  Let $T^* = \Th(M; <)$ and notice that this is a reduct of the theory from Example \ref{Expl_ManyOrdersDLO} for $n = 2$, as $x < y$ if and only if $x <_0 y \wedge x <_1 y$.  Therefore, $T$ is dependent.  However, reflection along a line ($\tau_a : M \rightarrow M$ where $\tau_a(x)$ is the reflection of $x$ along the line of slope one through $a \in M$) is an automorphism of $(M; <)$.  This, coupled with quantifier elimination of $T$ from Example \ref{Expl_ManyOrdersDLO}, shows that the only independence dimension $\le 1$ formula with non-parameter variable $x$ is $x = y$ (use automorphisms to alter an instance of $\varphi$ to overlap independently with the original).
\end{expl}



\subsection{Fully VC-Minimal Theories}\label{Subsection_FullyVCmin}


In this subsection, we discuss a variant of VC-minimality we call full VC-minimality.  Our main goal will be to show that fully VC-minimal theories have low VC-density.

\begin{defn}\label{Defn_FullyVCmin}
 Say that $T$ is \emph{fully VC-minimal} if there exists a family of formulas $\Psi$ (each of which has at least $x$ as a free variable) such that:
 \begin{itemize}
  \item [(i)] The set $\mathcal{A}_\Psi = \{ \psi(\mathfrak{C}; \overline{b}) : \psi(x; \overline{y}) \in \Psi, \overline{b} \in \mathfrak{C}^{\leng(\overline{y})} \}$ has independence dimension $\le 1$ with respect to $\mathfrak{C}$, and
  \item [(ii)] for all formulas $\varphi(x; \overline{y})$, there exists $\psi(x; \overline{y})$ a boolean combination of formulas from $\Psi$ with variables $(x;\overline{y})$ such that $\varphi(x; \overline{y})$ is $T$-equivalent to $\psi(x; \overline{y})$.
 \end{itemize}
\end{defn}

\begin{lem}\label{Lem_FullyVCmin2}
 This definition is equivalent to the same definition after replacing (ii) with the following condition:
 \begin{itemize}
  \item [(ii)'] For all formulas $\varphi(x; \overline{y})$ and all $\overline{b} \in \mathfrak{C}^{\leng(\overline{y})}$, there exists $\psi_0(x; \overline{y}), ..., \psi_{n-1}(x; \overline{y}) \in \Psi$ such that $\varphi(x; \overline{b})$ is equivalent to a boolean combination of the formulas $\psi_i(x; \overline{b})$ for $i < n$.
 \end{itemize}
\end{lem}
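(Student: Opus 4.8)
The plan is to handle the two implications separately. The implication (ii) $\Rightarrow$ (ii)$'$ is immediate: given $\varphi(x;\overline{y})$, condition (ii) supplies a boolean combination $\psi(x;\overline{y})$ of finitely many $\Psi$-formulas $\psi_0(x;\overline{y}),\ldots,\psi_{n-1}(x;\overline{y})$ (in the variables $(x;\overline{y})$) that is $T$-equivalent to $\varphi(x;\overline{y})$; specializing $\overline{y}$ to an arbitrary $\overline{b}\in\mathfrak{C}^{\leng(\overline{y})}$ then writes $\varphi(x;\overline{b})$ as the same boolean combination of the instances $\psi_i(x;\overline{b})$, which is exactly (ii)$'$. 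This direction uses neither compactness nor condition (i).

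For (ii)$'$ $\Rightarrow$ (ii), fix $\varphi(x;\overline{y})$ and let $\Theta$ be the collection of all boolean combinations of $\Psi$-instances in the variables $(x;\overline{y})$. By (ii)$'$, for each $\overline{b}$ there is some $\theta\in\Theta$ with $\models\forall x(\varphi(x;\overline{b})\leftrightarrow\theta(x;\overline{b}))$. Mirroring the proof of Lemma \ref{Lem_FinSubsetofFmlas}, I would apply compactness to the partial type
\[
 \Sigma(\overline{y}) = \{\, \neg\,\forall x\,(\varphi(x;\overline{y})\leftrightarrow\theta(x;\overline{y})) : \theta\in\Theta \,\},
\]
which is unsatisfiable because every $\overline{b}$ fails one of its conditions. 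This yields a finite $\Theta_0=\{\theta_0,\ldots,\theta_{m-1}\}\subseteq\Theta$ such that, for every $\overline{b}$, at least one $\theta_j$ defines $\varphi(x;\overline{b})$. Collecting the finitely many $\Psi$-formulas occurring in the $\theta_j$, say $\psi^{(1)},\ldots,\psi^{(N)}$, I would reorganize the conclusion: for every $\overline{b}$, the set $\varphi(\mathfrak{C};\overline{b})$ is a union of atoms of the finite boolean algebra generated by $\psi^{(1)}(\mathfrak{C};\overline{b}),\ldots,\psi^{(N)}(\mathfrak{C};\overline{b})$, where the atom indexed by $s\in{}^N 2$ is defined by $\alpha_s(x;\overline{y})=\bigwedge_{k=1}^{N}\psi^{(k)}(x;\overline{y})^{s(k)}$.

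The main obstacle is that the set of atoms comprising $\varphi(\mathfrak{C};\overline{b})$ may a priori depend on $\overline{b}$, whereas (ii) demands a single boolean combination valid for all parameters; the naive fix $\psi(x;\overline{y})=\bigvee_j(\sigma_j(\overline{y})\wedge\theta_j(x;\overline{y}))$ fails because the selectors $\sigma_j(\overline{y})=\forall x(\varphi(x;\overline{y})\leftrightarrow\theta_j(x;\overline{y}))$ are not boolean combinations of $\Psi$. My plan is to prove that the selection in fact collapses: that there is a fixed $S\subseteq{}^N 2$ with the property that, for every $\overline{b}$ and every $s$ with $\alpha_s(\mathfrak{C};\overline{b})\ne\emptyset$, one has $\alpha_s(\mathfrak{C};\overline{b})\subseteq\varphi(\mathfrak{C};\overline{b})$ if and only if $s\in S$. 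Granting this, $\psi(x;\overline{y})=\bigvee_{s\in S}\alpha_s(x;\overline{y})$ is the desired uniform boolean combination, since empty atoms contribute to neither side and are harmless.

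To establish the collapse I would argue by contradiction, and this is the step I expect to be hardest. A genuinely parameter-dependent decision on some nonempty atom $s$ would give parameters at which $\alpha_s$ lies inside $\varphi$ and others at which it lies outside. After invoking Proposition \ref{Prop_IDoneOrder} to fix a linear order on $\mathfrak{C}$ placing every $\psi^{(k)}(\mathfrak{C};\cdot)$ convexly or co-convexly, I would attempt to locate two instances from $\mathcal{A}_\Psi$ that are simultaneously crossing and non-nested, manufacturing a two-element independent subfamily and contradicting condition (i). Pinning down this shattering configuration — and thereby ruling out true parameter-dependence of the atom selection — is the crux, and is precisely the point at which the \emph{global} hypothesis $\ID(\mathcal{A}_\Psi)\le 1$, rather than mere dependence, is indispensable; indeed, the analogous passage fails for weak VC-minimality, as Example \ref{Expl_ManyOrdersDLO} shows.
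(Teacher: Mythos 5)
Your first direction and the compactness extraction of finitely many combinations $\theta_0,\dots,\theta_{m-1}$ are both fine, but the ``collapse'' claim to which you reduce the second direction is false, and with it your whole strategy of keeping the family $\Psi$ fixed. Note first that the lemma asserts the equivalence of two \emph{definitions}, i.e., of two existential statements (``there exists a family $\Psi$ satisfying (i) and (ii)/(ii)$'$''); it does not assert that a family satisfying (i) and (ii)$'$ must itself satisfy (ii), and indeed it need not. Take $T$ to be the theory of dense linear order and $\Psi=\{x<y,\ x\le y\}$. All instances are downward rays, hence pairwise nested, so condition (i) holds; by quantifier elimination every $\varphi(x;\overline{b})$ is a boolean combination of the formulas $x<b_i$, $x\le b_i$ (atomic formulas among the parameters are sentences, and $x=b_i$ is $x\le b_i\wedge\neg(x<b_i)$), so (ii)$'$ holds. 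Now let
\[
\varphi(x;y_1,y_2)\ :=\ \bigl(y_1<y_2\wedge x<y_1\bigr)\vee\bigl(\neg(y_1<y_2)\wedge\neg(x\le y_2)\bigr).
\]
No \emph{fixed} boolean combination $\psi(x;y_1,y_2)$ of $x<y_1,\,x\le y_1,\,x<y_2,\,x\le y_2$ is $T$-equivalent to $\varphi$: for any $a>\max(b_1,b_2)$ all four formulas are false at $a$, so $\psi(a;\overline{b})$ takes one constant truth value over all such $a$ and $\overline{b}$; yet such $a$ satisfy $\neg\varphi(x;\overline{b})$ when $b_1<b_2$ and satisfy $\varphi(x;\overline{b})$ when $b_2<b_1$. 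In your terminology, the ``top atom'' is nonempty for every $\overline{b}$, lies outside $\varphi(\mathfrak{C};\overline{b})$ for some parameters and inside it for others, so no fixed $S\subseteq{}^N2$ exists --- even though the family has independence dimension $\le 1$ (the instances are nested). So the crux step you defer cannot be carried out by any argument, and your appeal to Proposition \ref{Prop_IDoneOrder} cannot rescue it.

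The repair is precisely the obstacle you identified and then set aside: the selectors $\sigma_j(\overline{y})$ are not boolean combinations of $\Psi$-instances, so \emph{enlarge the family to include them}. The paper's proof sets $\Psi'=\Psi\cup\{\delta(\overline{y}): x \text{ not free in } \delta\}$. Since $x$ is a dummy variable in such $\delta$, each instance defines $\emptyset$ or $\mathfrak{C}$ in the variable $x$, and no independent pair can involve a trivial set, so condition (i) survives the enlargement. Then, with $\gamma_0,\dots,\gamma_\ell$ the finitely many combinations from your compactness step and $\delta_i(\overline{y})=\forall z\,(\varphi(z;\overline{y})\leftrightarrow\gamma_i(z;\overline{y}))$, the formula $\bigwedge_{i\le\ell}\bigl(\delta_i(\overline{y})\rightarrow\gamma_i(x;\overline{y})\bigr)$ is a boolean combination of formulas from $\Psi'$ that is $T$-equivalent to $\varphi(x;\overline{y})$, establishing (ii) for the new family $\Psi'$. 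The missing idea, in short, is that the lemma permits --- and, as the example above shows, requires --- changing the witnessing family.
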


Notice that this only differs from the definition of VC-minimal in that we insist that the instances of $\psi_i$ be exactly $\psi_i(x; \overline{b})$ for the same $\overline{b}$ from $\varphi(x; \overline{b})$.  Therefore, it is clear from this lemma that fully VC-minimal implies VC-minimal.

\begin{proof}[Proof of Lemma \ref{Lem_FullyVCmin2}]
 Since it is clear that (i) and (ii) implies (i) and (ii)', it suffices to show that (i) and (ii)' implies (i) and (ii).  So let $\Psi$ be given satisfying (i) and (ii)'.  First, let
 \[
  \Psi' = \Psi \cup \{ \delta(\overline{y}) : x \text{ is not a free variable in } \overline{y} \}.
 \]
 Since $x$ acts as a dummy variable in each $\delta(\overline{y}) \in \Psi' - \Psi$, it should be clear that $\Psi'$ still satisfies (i).  We claim that $\Psi'$ now satisfies (ii).

 Fix any formula $\varphi(x; \overline{y})$.  By (ii)' and compactness, there exists finitely many formulas $\gamma_0(x; \overline{y}), ..., \gamma_{\ell}(x; \overline{y})$ that are each a boolean combination of formulas from $\Psi'$ such that, for all $\overline{b} \in \mathfrak{C}^{\leng(\overline{b})}$, $\varphi(x; \overline{b})$ is equivalent to $\gamma_i(x; \overline{b})$ for some $i \le \ell$ (see the proof of Lemma \ref{Lem_FinSubsetofFmlas}).  Now let
 \[
  \delta_i(\overline{y}) = \forall z ( \varphi(z; \overline{y}) \leftrightarrow \gamma_i(z; \overline{y}) )
 \]
 for each $i \le \ell$ and we see that $\varphi(x; \overline{y})$ is equivalent to
 \[
  \bigwedge_{i \le \ell} \delta_i(\overline{y}) \rightarrow \gamma_i(x; \overline{y}).
 \]
 As $\delta_i(\overline{y}) \in \Psi'$, this is a boolean combination of formulas from $\Psi'$, hence showing (ii).
\end{proof}

\begin{prop}\label{Prop_WeakominFullyVCmin}
 If $T$ is weakly o-minimal, then $T$ is fully VC-minimal.
\end{prop}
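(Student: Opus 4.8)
The plan is to invoke Lemma \ref{Lem_FullyVCmin2}, so that it suffices to exhibit a family $\Psi$ satisfying condition (i) of Definition \ref{Defn_FullyVCmin} together with (ii)$'$. Let $<$ be the linear order underlying the weakly o-minimal theory $T$, and take $\Psi$ to be the family of all formulas $\psi(x; \overline{y})$ (with $x$ a single variable) such that $\psi(\mathfrak{C}; \overline{c})$ is a $<$-final segment of $\mathfrak{C}$ for every $\overline{c}$. Checking (i) is then immediate: any two final segments are nested under $\subseteq$, so for any $A_0, A_1 \in \mathcal{A}_\Psi$ one of $A_0 - A_1$, $A_1 - A_0$ is empty, no two members of $\mathcal{A}_\Psi$ are independent, and hence $\ID(\mathcal{A}_\Psi) \le 1$.

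For (ii)$'$ I would fix $\varphi(x; \overline{y})$ and use weak o-minimality of the \emph{theory} to obtain a uniform $K < \omega$ bounding the number of $<$-convex components of $\varphi(\mathfrak{C}; \overline{b})$ over all $\overline{b} \in \mathfrak{C}^{\leng(\overline{y})}$. For each $1 \le j \le K$ I introduce two ``counting'' formulas built directly from $\varphi$: a formula $\psi_j^L(x; \overline{y})$ asserting that at least $j$ distinct convex components of $\varphi(\mathfrak{C}; \overline{y})$ meet $(-\infty, x]$ (expressed by the existence of an alternating chain of witnesses $x_1 < z_1 < \dots < x_j \le x$ with $\varphi(x_i; \overline{y})$ and $\neg \varphi(z_i; \overline{y})$), and a formula $\psi_j^R(x; \overline{y})$ asserting that at least $j$ components lie entirely below $x$. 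Each condition is upward-closed in $x$, so $\psi_j^L$ and $\psi_j^R$ define final segments for every value of the parameters and therefore lie in $\Psi$. Writing $c_L(x)$ and $c_R(x)$ for the two counts, a point $x$ lies in $\varphi(\mathfrak{C}; \overline{b})$ precisely when some component has already started but not finished at $x$, i.e. when $c_L(x) > c_R(x)$; equivalently, when $x \in \psi_j^L(\mathfrak{C}; \overline{b}) - \psi_j^R(\mathfrak{C}; \overline{b})$ for some $j$. Thus
\[
 \varphi(x; \overline{b}) \text{ is equivalent to } \bigvee_{j = 1}^{K} \left( \psi_j^L(x; \overline{b}) \wedge \neg \psi_j^R(x; \overline{b}) \right),
\]
a boolean combination of formulas of $\Psi$ all instantiated at the \emph{same} parameter $\overline{b}$, which is exactly (ii)$'$. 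Lemma \ref{Lem_FullyVCmin2} then yields full VC-minimality.

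The hard part will be bookkeeping at the endpoints. The conceptual core --- nested final segments give independence dimension $\le 1$, and the comparison $c_L > c_R$ recovers membership in $\varphi(\mathfrak{C}; \overline{b})$ --- is clean, but one must choose the strictness of the inequalities in $\psi_j^L$ and $\psi_j^R$ (in particular whether the last witness is required to satisfy $\le x$ or $< x$, and whether the separating witnesses $z_i$ are demanded on the correct side) so that the displayed disjunction reproduces $\varphi(\mathfrak{C}; \overline{b})$ exactly, including its boundary points, regardless of whether the convex components are open, closed, or half-open. Since both counting formulas are defined from $\varphi$ itself this can be arranged, but it requires a careful case analysis on the endpoint behaviour. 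The only other place where genuine input is needed is the uniform bound $K$, and this is precisely where weak o-minimality of the theory (rather than of a single model) is used.
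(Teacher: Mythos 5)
Your proof is correct and takes essentially the same route as the paper: both build a skeleton of ray-formulas that count how often $\varphi(\cdot;\overline{b})$ has alternated below $x$ (the paper's $\psi_{\varphi,n}$ define downward-closed convex sets, your $\psi_j^L, \psi_j^R$ final segments), observe that such rays are pairwise nested and hence have independence dimension $\le 1$, and then recover $\varphi(x;\overline{b})$ as a boolean combination of same-parameter instances so that Lemma \ref{Lem_FullyVCmin2} applies. Your comparison of the two counts via $\bigvee_{j=1}^{K}\left(\psi_j^L \wedge \neg\psi_j^R\right)$ merely makes explicit (and uniform in $\overline{b}$) the recovery step that the paper's proof leaves as ``clear.''
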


\begin{proof}
 For each formula $\varphi(x; \overline{y})$ from $T$ and each $n < \omega$, we define the formula that gives the $n$th leftward ray carved out by $\varphi(x; \overline{y})$:
 \[
  \psi_{\varphi,n}(x; \overline{y}) = \neg \exists w_0 ... w_n \left(x = w_n \wedge \bigwedge_{i < n} \bigl(w_i < w_{i+1} \wedge \varphi(w_i; \overline{y}) \not\leftrightarrow \varphi(w_{i+1}; \overline{y}) \bigr) \right).
 \]
 Then, let
 \[
  \Psi = \{ \psi_{\varphi,n}(x; \overline{y}) : \varphi(x; \overline{y}) \text{ is any formula, } n \ge 1 \}.
 \]
 For all $\overline{b} \in \mathfrak{C}^{\leng(\overline{y})}$ and all formulas $\varphi(x; \overline{y})$, $\psi_{\varphi,n}(x; \overline{b})$ is a downward-closed convex set.  Therefore, we see that $\Psi$ satisfies (i).  By definition of weak o-minimality and $\psi_{\varphi,n}$, it is clear that condition (ii)' holds for any $\varphi(x; \overline{y})$.  Therefore, by Lemma \ref{Lem_FullyVCmin2}, we see that $T$ is fully VC-minimal.
\end{proof}

The converse of Proposition \ref{Prop_WeakominFullyVCmin} fails.  To see this, consider the modification of Example \ref{Expl_dpminnotimpliesCO} where we replace $\omega_1$ with $\omega$.  This theory is fully VC-minimal but is not weakly o-minimal (in fact, has no definable linear ordering).

In the remainder of this subsection, we show that fully VC-minimal theories have low VC-density.  To do this, we use methods developed from the study of uniform definability of types over finite sets (UDTFS) in \cite{Guingona2}.  We begin with the definition of UDTFS rank:

\begin{defn}\label{Defn_UDTFSRank}
 A formula $\varphi(\overline{x}; \overline{y})$ has \emph{UDTFS rank} $n < \omega$ if $n$ is minimal such that there exists finitely many formulas $\{ \psi_\ell(\overline{y}; \overline{z}_0, ..., \overline{z}_{n-1}) : \ell < L \}$ such that, for all finite non-empty $B \subseteq \mathfrak{C}^{\leng(\overline{y})}$ and for all $p(\overline{x}) \in S_\varphi(B)$, there exists $\ell < L$ and $\overline{c}_0, ..., \overline{c}_{n-1} \in B$ such that $\psi_\ell(\overline{y}; \overline{c}_0, ..., \overline{c}_{n-1})$ defines $p$ (i.e., for all $\overline{b} \in B$, $\models \psi_\ell(\overline{b}; \overline{c}_0, ..., \overline{c}_{n-1})$ if and only if $\varphi(\overline{x}; \overline{b}) \in p$).  We say that a formula has \emph{UDTFS} if it has finite UDTFS rank and we say that a theory $T$ has \emph{UDTFS} if all formulas do.
\end{defn}

We make the simple observation that, if $\varphi$ has UDTFS rank $n$, then $\varphi$ has VC-density $\le n$.  This is because the type space over any finite set $B$ is determined by some $\ell < L$ and $n$ elements from $B$, hence $| S_\varphi(B) | \le L \cdot |B|^n$.  We now show that UDTFS rank is subadditive, yielding a sufficiency of a single variable result: If all formulas of a theory $T$ of the form $\varphi(x; \overline{y})$ has UDTFS rank $\le 1$, then $T$ has low VC-density.

One should note that a very similar method was developed independently by Aschenbrenner, Dolich, Haskell, MacPherson, and Starchenko in \cite{ADHMS}.  In that paper, they use a similar technique that they call the \emph{VC $d$ property} to prove that weakly o-minimal theories have low VC-density (see Theorem 6.1 of \cite{ADHMS}).

\begin{thm}\label{Thm_UDTFSrankAdditive}
 Fix $T$ a theory and $k < \omega$.  If all formulas of the form $\varphi(x; \overline{y})$ have UDTFS rank $\le k$, then all formulas of the form $\varphi(\overline{x}; \overline{y})$ have UDTFS rank $\le k \cdot \leng(\overline{x})$.
\end{thm}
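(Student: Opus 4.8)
The plan is to induct on $m = \leng(\overline{x})$. The base case $m = 1$ is exactly the hypothesis. For the inductive step I would write $\overline{x} = (x_0, \overline{x}')$ with $\leng(\overline{x}') = m-1$, fix a formula $\varphi(x_0, \overline{x}'; \overline{y})$, a finite nonempty $B \subseteq \mathfrak{C}^{\leng(\overline{y})}$, and a type $p \in S_\varphi(B)$ realized by $\overline{a} = (a_0, \overline{a}')$. The goal is to produce a defining schema for $p$ drawing on at most $km$ parameters from $B$, with the schema chosen from a finite list depending only on $\varphi$.

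First I would peel off the single variable $x_0$. Regard $\varphi$ as a formula $\varphi(x_0; \overline{x}'\,\overline{y})$ in the single object variable $x_0$ with parameter variable $(\overline{x}', \overline{y})$; by hypothesis this has UDTFS rank $\le k$. Apply its defining family to the type realized by $a_0$ over the parameter set $\{(\overline{a}', \overline{b}) : \overline{b} \in B\}$. Since every element of this set shares the coordinate $\overline{a}'$, we obtain a single schema $\theta_\ell$ from a finite list, together with parameters $(\overline{a}', \overline{b}_0), \dots, (\overline{a}', \overline{b}_{k-1})$ all carrying the same $\overline{a}'$, such that for every $\overline{b} \in B$,
\[
 \models \theta_\ell\bigl((\overline{a}', \overline{b}); (\overline{a}', \overline{b}_0), \dots, (\overline{a}', \overline{b}_{k-1})\bigr) \iff\ \models \varphi(a_0, \overline{a}'; \overline{b}) \iff \varphi(\overline{x}; \overline{b}) \in p.
\]

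Next I would recast this as a statement about $\overline{a}'$. Substituting $\overline{x}'$ for each copy of the $\overline{x}'$-slot in $\theta_\ell$ (legitimate precisely because all those slots are instantiated by the common $\overline{a}'$) yields an $L$-formula $\Xi_\ell(\overline{x}'; \overline{y}, \overline{y}_0, \dots, \overline{y}_{k-1})$ in the object variable $\overline{x}'$ of length $m-1$, for which the displayed equivalence reads $\models \Xi_\ell(\overline{a}'; \overline{b}, \overline{b}_0, \dots, \overline{b}_{k-1}) \iff \varphi(\overline{x}; \overline{b}) \in p$. Thus the $\Xi_\ell$-type realized by $\overline{a}'$ over $B' = \{(\overline{b}, \overline{b}_0, \dots, \overline{b}_{k-1}) : \overline{b} \in B\}$ encodes exactly $p$. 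Applying the inductive hypothesis to $\Xi_\ell$ (rank $\le k(m-1)$) gives a schema $\rho_{\ell'}$ from a finite list using $k(m-1)$ parameters from $B'$, each of the form $(\overline{b}^{(i)}, \overline{b}_0, \dots, \overline{b}_{k-1})$ with $\overline{b}^{(i)} \in B$. Chaining the two equivalences, $\varphi(\overline{x}; \overline{b}) \in p$ is governed, for all $\overline{b} \in B$, by $\rho_{\ell'}$ evaluated at $(\overline{b}, \overline{b}_0, \dots)$ against these parameters; this is a defining schema for $p$, regarded as an $L$-formula in $\overline{y}$. I then collect the finitely many resulting schemata as $(\ell, \ell')$ vary.

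The point requiring care is the parameter count. The final schema invokes the $k$ tuples $\overline{b}_0, \dots, \overline{b}_{k-1}$ contributed by the single-variable step and the $k(m-1)$ tuples $\overline{b}^{(1)}, \dots, \overline{b}^{(k(m-1))}$ contributed by the inductive step. Although the tail $\overline{b}_0, \dots, \overline{b}_{k-1}$ is syntactically repeated in every parameter slot fed to $\rho_{\ell'}$, it must be counted once, so the number of distinct parameters from $B$ is $k + k(m-1) = km$. Verifying that the shared coordinates are correctly identified and tallied, that each substitution instance remains a genuine $L$-formula, and that the family of schemata stays finite is the main obstacle; once this bookkeeping is in place, the bound $\le k\cdot\leng(\overline{x})$ falls out of the induction.
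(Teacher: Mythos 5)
Your proof is correct and is essentially the paper's own argument (which follows Lemma 2.6 of \cite{Guingona2}): the same subadditivity mechanism in which the coordinate shared by all parameters is substituted into the parameter slots of a defining schema, the second rank bound is applied to the substituted schema, and the repeated tail of parameters is counted once, giving $k + k(m-1) = km$. The only difference is cosmetic --- you peel off the \emph{first} variable and apply the single-variable hypothesis to $\varphi(x_0;\overline{x}'\,\overline{y})$ before applying the induction hypothesis to the derived schemata $\Xi_\ell$, whereas the paper peels off the \emph{last} variable, applies the induction hypothesis to $\hat{\varphi}(x_0,\dots,x_{n-2};x_{n-1},\overline{y})$, and then applies the single-variable hypothesis to the derived schemata $\psi^*_\ell$.
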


The proof of this essentially follows from the proof of Lemma 2.6 of \cite{Guingona2}.

\begin{proof}
 Fix a partitioned formula $\varphi(\overline{x}; \overline{y})$ with $n = \leng(\overline{x})$.  Suppose that
 \[
  \hat{\varphi}(x_0, ..., x_{n-2}; x_{n-1}, \overline{y}) = \varphi(\overline{x}; \overline{y})
 \]
 has UDTFS rank $\le K_0$ and we take a set of formulas
 \[
  \{ \psi_\ell(x_{n-1}, \overline{y}; w_0, \overline{z}_0, ..., w_{K_0-1}, \overline{z}_{K_0-1}) : \ell < L_0 \}
 \]
 witnessing this.  As in the proof of Lemma 2.6 of \cite{Guingona2}, for each $\ell < L_0$, let
 \[
  \psi^*_\ell(x_n; \overline{y}, \overline{z}_0, ..., \overline{z}_{k-1}) = \psi_\ell(x_{n-1}, \overline{y}; x_{n-1}, \overline{z}_0, ..., x_{n-1}, \overline{z}_{k-1})
 \]
 and suppose that $\psi^*_\ell(x_n; \overline{y}, \overline{\mathbf{z}})$ has UDTFS rank $\le K_1$ for each $\ell < L_0$.  For each $\ell < L_0$, suppose this is witnessed by
 \[
  \bigl\{ \gamma_{\ell, \ell'}(\overline{y}, \overline{z}_0, ..., \overline{z}_{K_0-1}; \overline{v}_0, \overline{u}_{0,0}, ..., \overline{u}_{0,K_0-1}, ..., \overline{v}_{K_1-1}, \overline{u}_{K_1-1,0}, ..., \overline{u}_{K_1-1,K_0-1}) : \ell' < L_1 \bigr\}.
 \]
 Again, as in the proof of Lemma 2.6 of \cite{Guingona2}, for each $\ell < L_0$ and $\ell' < L_1$, let
 \begin{align*}
  & \gamma^*_{\ell, \ell'}(\overline{y}; \overline{z}_0, ..., \overline{z}_{K_0-1}, \overline{v}_0, ..., \overline{v}_{K_1-1}) = \\
  & \gamma_{\ell, \ell'}(\overline{y}, \overline{z}_0, ..., \overline{z}_{K_0-1}; \overline{v}_0, \overline{z}_0, ..., \overline{z}_{K_0-1}, ..., \overline{v}_{K_1-1}, \overline{z}_0, ..., \overline{z}_{K_0-1}).
 \end{align*}
 Then, just as in the proof of Lemma 2.6 of \cite{Guingona2}, we see that $\{ \gamma^*_{\ell, \ell'} : \ell < L_0, \ell' < L_1 \}$ is a witness to the fact that the UDTFS rank of $\varphi$ is $\le K_0 + K_1$.

 Now, given the above information, we prove the theorem by induction on $n = \leng(\overline{x})$.  For $n = 1$, this is given by hypothesis.  Fix $n > 1$, fix $\varphi(\overline{x}; \overline{y})$ with $n = \leng(\overline{x})$.  Note that $\hat{\varphi}$ has UDTFS rank $\le k \cdot (n-1)$ by induction and each $\psi^*_\ell$ has UDTFS rank $\le k$ by assumption.  Therefore, the UDTFS rank of $\varphi$ is $\le k (n-1) + k = kn$, as desired.
\end{proof}

We now show the desired result.

\begin{thm}\label{Thm_FullyVCminVCdensityone}
 If $T$ is fully VC-minimal, then $T$ has low VC-density.
\end{thm}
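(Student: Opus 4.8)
The plan is to reduce everything, via the subadditivity of UDTFS rank established in Theorem \ref{Thm_UDTFSrankAdditive}, to the single-variable case. Precisely, I would show that full VC-minimality implies that every formula $\varphi(x;\overline{y})$ with $x$ a singleton has UDTFS rank $\le 1$. Granting this, Theorem \ref{Thm_UDTFSrankAdditive} (applied with $k=1$) yields that every partitioned formula $\varphi(\overline{x};\overline{y})$ has UDTFS rank $\le \leng(\overline{x})$, and hence, by the observation preceding Theorem \ref{Thm_UDTFSrankAdditive} that UDTFS rank $\le n$ implies VC-density $\le n$, every $\varphi(\overline{x};\overline{y})$ has VC-density $\le \leng(\overline{x})$; this is exactly low VC-density. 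Thus the entire content of the theorem is concentrated in the single-variable UDTFS bound.

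To prove that bound, fix a witnessing family $\Psi$ for full VC-minimality. Since the whole family $\mathcal{A}_\Psi$ has independence dimension $\le 1$ with respect to $\mathfrak{C}$, Proposition \ref{Prop_IDoneOrder} furnishes a single linear order $<$ on $\mathfrak{C}$ such that, for every $\psi(x;\overline{y})\in\Psi$ and every $\overline{b}$, either $\psi(\mathfrak{C};\overline{b})$ or its complement is $<$-convex. By Definition \ref{Defn_FullyVCmin}(ii) (equivalently, Lemma \ref{Lem_FullyVCmin2}), a fixed $\varphi(x;\overline{y})$ is $T$-equivalent to a boolean combination of finitely many $\psi^{(1)}(x;\overline{y}),\dots,\psi^{(m)}(x;\overline{y})\in\Psi$ in the common variables $(x;\overline{y})$, crucially all sharing the single parameter block $\overline{y}$. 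Consequently, for any $\overline{b}$ the set $\varphi(\mathfrak{C};\overline{b})$ is a boolean combination of the convex-or-co-convex sets $\psi^{(t)}(\mathfrak{C};\overline{b})$, so it is a union of at most $m$ convex pieces whose endpoints are among the endpoints of the individual $\psi^{(t)}(\mathfrak{C};\overline{b})$.

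Now fix a finite $B\subseteq\mathfrak{C}^{\leng(\overline{y})}$. The at most $2m|B|$ endpoints coming from all $\overline{b}\in B$ cut $\mathfrak{C}$ into boundedly many $<$-convex cells, and on each cell the truth value of $\varphi(x;\overline{b})$ is constant for every $\overline{b}\in B$; hence the $\varphi$-type over $B$ of a point depends only on the cell in which it lies. I would then associate to each cell a single parameter $\overline{c}\in B$, namely one of the $\overline{b}$ furnishing the endpoint bounding the cell on a fixed side, recording in addition which $\psi^{(t)}$ and which kind of endpoint is involved (a finite amount of auxiliary data to be absorbed into the index $\ell$). The goal is to produce finitely many formulas $\psi_\ell(\overline{y};\overline{z})$ so that $\psi_\ell(\overline{b};\overline{c})$ expresses that the cell immediately adjacent to the chosen endpoint of $\psi^{(t)}(\mathfrak{C};\overline{c})$ lies inside $\varphi(\mathfrak{C};\overline{b})$, thereby defining the type from the single parameter $\overline{c}$.

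I expect the last step to be the main obstacle. The order $<$ is not definable, so the formulas $\psi_\ell$ may not mention it; the relation ``the point immediately to one side of an endpoint of $\psi^{(t)}(\cdot;\overline{c})$ satisfies $\varphi(\cdot;\overline{b})$'' must instead be captured by a first-order formula in $\overline{b},\overline{c}$ quantifying only over the home sort, exactly in the spirit of the uniform-definability arguments of \cite{Guingona2}. Verifying that finitely many such formulas suffice uniformly in $B$, and that each type is captured using a single parameter rather than two, is the delicate point; it is precisely here that the global independence dimension $\le 1$ of $\mathcal{A}_\Psi$, as opposed to the merely formula-by-formula condition of weak VC-minimality, is used, since it is what guarantees one common convexifying order and hence the linear (rather than higher-degree) bookkeeping of cells.
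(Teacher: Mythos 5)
Your global strategy coincides with the paper's: prove that every $\varphi(x;\overline{y})$ with $x$ a singleton has UDTFS rank $\le 1$, then invoke Theorem \ref{Thm_UDTFSrankAdditive} with $k=1$ together with the observation that UDTFS rank $\le n$ implies VC-density $\le n$. But the heart of the theorem is precisely that single-variable UDTFS bound, and your sketch of it has a genuine gap --- one you yourself flag in your final paragraph. Counting cells cut out by endpoints of the convexifying order from Proposition \ref{Prop_IDoneOrder} only re-proves (a case of) Proposition \ref{Prop_ConvexlyOrderableimpliesVCDensityOne}, namely VC-density $\le 1$ for singleton $x$, which does not suffice: VC-density is not known to be subadditive in the number of free variables, which is exactly why the paper routes the argument through UDTFS rank. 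UDTFS rank demands uniformly \emph{definable} types, and since the order $<$ is an arbitrary set-theoretic object, ``endpoint,'' ``cell,'' and ``immediately adjacent'' are not first-order notions. Nothing in your sketch explains how to convert the cell bookkeeping into actual formulas $\psi_\ell(\overline{y};\overline{z})$; this conversion is the entire content of the step, not a routine verification to be absorbed into an index $\ell$.

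The paper's proof bypasses the order altogether and works with definable inclusion relations. Write $\varphi$ as a boolean combination $\gamma(x;\overline{y})$ of formulas $\psi_0(x;\overline{y}),\dots,\psi_{n-1}(x;\overline{y})\in\Psi$ sharing the parameter block $\overline{y}$ (condition (ii), as you note). Given finite $B$ and a point $a$, choose $i<n$, $t<2$, and $\overline{b}_0\in B$ with $\models\psi_i(a;\overline{b}_0)^t$ and with $\psi_i(x;\overline{b}_0)^t$ $\vdash$-minimal among all such satisfied instances. Because $\mathcal{A}_\Psi$ has independence dimension $\le 1$ \emph{globally}, for every $j<n$ and $\overline{b}\in B$ one of the four inclusions $\psi_i(x;\overline{b}_0)^t\vdash\psi_j(x;\overline{b})$, $\psi_i(x;\overline{b}_0)^t\vdash\neg\psi_j(x;\overline{b})$, $\psi_j(x;\overline{b})\vdash\psi_i(x;\overline{b}_0)^t$, or $\neg\psi_j(x;\overline{b})\vdash\psi_i(x;\overline{b}_0)^t$ must hold; each of these is expressible by a first-order formula in $(\overline{b};\overline{b}_0)$, and $\vdash$-minimality forces the truth value of $\psi_j(a;\overline{b})$ to be determined by which of them hold. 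Assembling these determinations through the boolean combination $\gamma$ produces finitely many formulas $\delta'_{i,t}(\overline{y};\overline{y}_0)$ such that $\delta'_{i,t}(\overline{y};\overline{b}_0)$ defines $\tp_\varphi(a/B)$ from the single parameter $\overline{b}_0$, witnessing UDTFS rank $\le 1$. This minimal instance is exactly the definable surrogate for your ``smallest cell containing $a$'': it plays the role of the cell, and the independence-dimension dichotomy replaces the undefinable adjacency of endpoints. You correctly sensed that the global (rather than formula-by-formula) condition on $\Psi$ is what must be used here, but the way it is used is through this implication dichotomy, not through the existence of a common convex order.
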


\begin{proof}
 We first show that any formula of the form $\varphi(x; \overline{y})$ has UDTFS rank $\le 1$.  By Theorem \ref{Thm_UDTFSrankAdditive}, this implies $T$ has low VC-density.  Fix $\varphi(x; \overline{y})$ any such formula.  By condition (ii) of Definition \ref{Defn_FullyVCmin}, there exists $\gamma(x; \overline{y})$ a boolean combination of formulas from $\Psi$ with free variables $(x; \overline{y})$ such that $\varphi$ is equivalent to $\gamma$.  Say that
 \[
  \gamma(x; \overline{y}) = \bigvee_{\mu \in I} \left( \bigwedge_{i < n} \psi_i(x; \overline{y})^{\mu(i)} \right)
 \]
 for $n < \omega$, $I \subseteq {}^n 2$, and $\psi_0, ..., \psi_{n-1} \in \Psi$.

 We now generate a uniform algorithm for determining $\varphi$-types using only a single instance, showing $\varphi$ has UDTFS rank $\le 1$.  Fix $B \subseteq \mathfrak{C}^{\leng(\overline{y})}$ finite and $a \in \mathfrak{C}$.  Choose $i < n$, $t < 2$, and $\overline{b}_0 \in B$ such that
 \begin{itemize}
  \item [(i)] $\models \psi_i(a; \overline{b}_0)^t$, and
  \item [(ii)] $\psi_i(x; \overline{b}_0)^t$ is $\vdash$-minimal such.
 \end{itemize}
 By condition (i) of Definition \ref{Defn_FullyVCmin}, for all $j < n$ and $\overline{b} \in B$, one of the following holds:
 \begin{itemize}
  \item [(i)] $\psi_i(x; \overline{b}_0)^t \vdash \psi_j(x; \overline{b})$;
  \item [(ii)] $\psi_i(x; \overline{b}_0)^t \vdash \neg \psi_j(x; \overline{b})$;
  \item [(iii)] $\psi_j(x; \overline{b}) \vdash \psi_i(x; \overline{b}_0)^t$; or
  \item [(iv)] $\neg \psi_j(x; \overline{b}) \vdash \psi_i(x; \overline{b}_0)^t$.
 \end{itemize}
 If (i) holds, then $\models \psi_j(a; \overline{b})$ and if (ii) holds, then $\models \neg \psi_j(a; \overline{b})$, so we may assume (i) and (ii) fails.  If (iii) holds, then this contradicts the $\vdash$-minimality of $\psi_i(x; \overline{b}_0)^t$ unless $\models \neg \psi_j(a; \overline{b})$.  Similarly, if (iv) holds, then $\models \psi_j(a; \overline{b})$.  Thus, $\psi_j(a; \overline{b})$ holds if and only if
 \begin{align*}
  \delta_{i,j,t}(\overline{b}; \overline{b}_0) = & \forall x ( \psi_i(x; \overline{b}_0)^t \rightarrow \psi_j(x; \overline{b}) ) \vee \\ & [ \neg \forall x ( \psi_i(x; \overline{b}_0)^t \rightarrow \neg \psi_j(x; \overline{b}) ) \wedge \forall x ( \neg \psi_j(x; \overline{b}) \rightarrow \psi_i(x; \overline{b}_0)^t ) ]
 \end{align*}
 holds.  Finally, notice that $\gamma(a; \overline{b})$ holds if and only if
 \[
  \delta'_{i,t}(\overline{b}; \overline{b}_0) = \bigvee_{\mu \in I} \left( \bigwedge_{j < n} \delta_{i,j,t}(\overline{b}; \overline{b}_0)^{\mu(j)} \right)
 \]
 holds.  Therefore, we see that the set $\{ \delta'_{i,t}(\overline{y}; \overline{y}_0) : i < n, t < 2 \}$ shows that $\varphi$ has UDTFS rank $\le 1$.  This concludes the proof.
\end{proof}

Since weakly o-minimal theories are fully VC-minimal, Theorem \ref{Thm_FullyVCminVCdensityone} generalizes Theorem 6.1 of \cite{ADHMS}.  The problem with full VC-minimality is that, unlike standard VC-minimality, it does not generalize strong minimality.  That is, there exists strongly minimal theories that are not fully VC-minimal.  Worse than that, there are strongly minimal theories with a formula $\varphi(x; \overline{y})$ that does not have UDTFS rank $\le 1$.

\begin{expl}\label{Expl_ACF0ACVF0NoUDTFSRank1}
 In the theory $\mathrm{ACF}_0$ and the theory $\mathrm{ACVF}_{(0,0)}$, the formula $\varphi(x; \overline{y})$ given by $\varphi(x; y_0, y_1) = [x^2 + y_0 x + y_1 = 0]$ has UDTFS rank $2$.
\end{expl}

To show this, take $\alpha_0, \alpha_1, \alpha_2, \alpha_3 \in \mathbb{C}$ algebraically independent.  For each $i < j < 4$, define $\overline{b}_{i,j} = (b^0_{i,j}, b^1_{i,j})$ so that
\[
 x^2 + b^0_{i,j} x + b^1_{i,j} = (x - \alpha_i)(x - \alpha_j).
\]
Let $B = \{ \overline{b}_{i,j} : i < j < 4 \}$ and let $p(x) = \tp_\varphi(\alpha_0 / B)$.  Notice that any permutation $\sigma$ of $\{ \alpha_0, \alpha_1, \alpha_2, \alpha_3 \}$ extends to an automorphism of $\mathbb{C}$ (or $\mathbb{C}((t^{\mathbb{Q}}))$ for the $\mathrm{ACVF}_{(0,0)}$ case).  Therefore, if there was some formula witnessing UDTFS rank $\le 1$ for $p$, we can use automorphisms to derive a contradiction.  One can show explicitly that the UDTFS rank of $\varphi$ is $\le 2$, hence it is exactly $2$.

By the proof of Theorem \ref{Thm_FullyVCminVCdensityone}, $\mathrm{ACF}_0$ and $\mathrm{ACVF}_{(0,0)}$ cannot possibly be fully VC-minimal.  The property of VC-minimality has the advantage of essentially being the fusion of weakly o-minimal and strongly minimal.  This is basically why $\mathrm{ACVF}_{(0,0)}$, which has both a weakly o-minimal part (the value group) and a strongly minimal part (the residue field), is VC-minimal.  Example \ref{Expl_ACF0ACVF0NoUDTFSRank1} is disappointing because it shows that the method of Theorem \ref{Thm_UDTFSrankAdditive} does not suffice to prove that $\mathrm{ACVF}_{(0,0)}$ has low VC-density.

\section{Open Questions}


After studying convex orderability, we are left with some open questions about the property.

\textbf{Question.} If $T$ is convexly orderable, then is $T$ the reduct of a VC-minimal theory?  Is it weakly VC-minimal?

\textbf{Question.} Is there some kind of cellular decomposition for convexly orderable theories?

More generally, we have the following implications:

\begin{center}
 \begin{tabular}{c c c c c c c}
   weakly o-minimal & $\rightarrow$ & fully VC-minimal & $\rightarrow$ & low VC-density     &               &              \\
                    &               & $\downarrow$     &               &                    &  $\searrow$   &              \\
   strongly minimal & $\rightarrow$ & VC-minimal       & $\rightarrow$ & convexly orderable & $\rightarrow$ & dp-minimal   \\
                    &               &                  & $\searrow$    &                    &               & $\downarrow$ \\
                    &               &                  &               & weakly VC-minimal  & $\rightarrow$ &  dependent   \\
 \end{tabular}
\end{center}

This leaves us with open questions about missing implication arrows.

\textbf{Question.} If $T$ is dp-minimal or of low VC-density, then is $T$ a weakly VC-minimal theory?

\textbf{Question} (\cite{ADHMS})\textbf{.} If $T$ is VC-minimal, then does $T$ have low VC-density?

\nocite{*}
\bibliographystyle{plain}
\bibliography{VCMinimal}

\begin{thebibliography}{1}

\bibitem{Adler2008}
H.~Adler.
\newblock {Theories controlled by formulas of Vapnik-Chervonenkis codimension
  1}.
\newblock preprint, July 2008.

\bibitem{ADHMS}
M.~Aschenbrenner, A.~Dolich, D.~Haskell, H.D. MacPherson, and S.~Starchenko.
\newblock {Vapnik-Chervonenkis density in some theories without the
  independence property, I}.
\newblock preprint, September 2011.

\bibitem{DGL}
A.~Dolich, J.~Goodrick, and D.~Lippel.
\newblock Dp-minimality: basic facts and examples.
\newblock {\em Notre Dame J. Form. Log.}, 52(3):267--288, 2011.

\bibitem{Guingona2}
V.~Guingona.
\newblock On uniform definability of types over finite sets.
\newblock {\em J. Symbolic Logic}.
\newblock to appear.

\bibitem{Hrushovski}
E.~Hrushovski.
\newblock Kueker's conjecture for stable theories.
\newblock {\em J. Symbolic Logic}, 54(1):207--220, 1989.

\bibitem{KOU}
I.~Kaplan, A.~Onshuus, and A.~Usvyatsoc.
\newblock Additivity of the dp-rank.
\newblock preprint, 2010.

\bibitem{Shelah}
S.~Shelah.
\newblock {\em Classification theory and the number of non-isomorphic models}.
\newblock North-Holland Publishing Company, 1978.

\end{thebibliography}

\end{document}